\documentclass{amsart}

\usepackage[T1]{fontenc}
\usepackage{graphicx} 
\usepackage{amssymb}
\usepackage{tikz}
\usetikzlibrary{arrows}
\usepackage{tikz-cd}
\usepackage{hyperref}
\usepackage{bm}
\usepackage{thmtools}
\usepackage{mathptmx}

\hypersetup{
	colorlinks=true,
	linktocpage=true,
	linkcolor=[RGB]{161,1,49},
	citecolor=[RGB]{10,136,169},      
	urlcolor=cyan,
}

\newtheorem{theorem}{Theorem}[section]
\newtheorem{thm}{Theorem}

\newtheorem{corollary}[theorem]{Corollary}
\newtheorem{lemma}[theorem]{Lemma}
\newtheorem{proposition}[theorem]{Proposition}

\newtheorem{definition}[theorem]{Definition}
\newtheorem{defn}[thm]{Definition}

\theoremstyle{definition}
\newtheorem{remark}[theorem]{Remark}

\newtheorem{example}[theorem]{Example}

\title[Natural extensions of embeddable semigroup actions]{Natural extensions of embeddable semigroup actions}
\author{Raimundo Briceño}
\author{Álvaro Bustos-Gajardo}
\author{Miguel Donoso-Echenique}
\address{Facultad de Matem\'aticas, Pontificia Universidad Cat\'olica de Chile. Santiago, Chile}
\email{\{raimundo.briceno, abustog, miguel.donosoe\}@uc.cl}

\subjclass[2010]{Primary 20M30, 37B02, 20M05; Secondary 37B10, 20F05, 20M50.}

\date{}

\newcommand{\R}{\mathbb{R}}
\newcommand{\N}{\mathbb{N}}
\newcommand{\Z}{\mathbb{Z}}
\newcommand{\F}{\mathbb{F}}

\newcommand{\acts}{\curvearrowright}

\newcommand{\sgmorph}{\theta}
\newcommand{\grmorph}{\theta}
\newcommand{\G}{\mathbf{G}}

\newcommand{\receivingSgroup}{\mathbf{G}} 
\newcommand{\freeSgroup}{\bm{\Gamma}} 

\newcommand{\GrRightFrac}[1]{\freeSgroup_{#1}}

\newcommand{\genextmap}{\tau}

\newcommand{\GextensionSet}{\hat{X}}
\newcommand{\Gextension}{\hat{\mathbf{X}}}
\newcommand{\naturalGextension}{\mathbf{X}_\receivingSgroup}

\begin{document}

\allowdisplaybreaks

\begin{abstract}
Semigroup actions and their invertible extensions are discussed. First, we develop a theory of natural extensions for continuous actions of countable, embeddable semigroups. Second, we demonstrate that not every surjective such action of a semigroup, which embeds into a group and generates it, can be extended to an action of said group, and that this phenomenon is specific to non-reversible semigroups. Furthermore, we characterize the free group on a semigroup (the group together with the embedding) as the unique pair that always admits such an extension, showing that both the choice of the receiving group and the embedding are crucial for this construction. Next, we prove that the classical notion of a natural extension---requiring all other invertible extensions to factor through it---only works in the context of compact extensions of left reversible semigroup actions and fails outside of it, thus providing a characterization of left reversibility. We finish by briefly studying topological dynamical properties of the natural extension in the amenable case.
\end{abstract}

\keywords{Countable semigroup; semigroup action; natural extension; reversible semigroup}

\maketitle
\setcounter{tocdepth}{2}
\tableofcontents

\section*{Introduction}

Natural extensions---introduced in a measure-theoretical context in \cite{rohlin1961}---are discussed in several standard textbooks \cite{petersen1989ergodic,viana2016foundations} and are particularly useful to generalize arguments from the invertible setting to the non-invertible case. This construction has been extended to some semigroup actions, such as the multidimensional case of $\N^d$-actions \cite{lacroix1995natural}. However, work beyond the setting of $\N^d$ has been sparse and some of the results in this area have been mostly reduced to folklore.

Given a topological space $X$, a semigroup $S$, and a continuous action $S \acts X$, the problem may be summarized as: (1) identifying a \emph{receiving group}, i.e., a group $G$ which allows an embedding of $S$; (2) finding a topological space $\hat{X}$ in terms of $X$ and $G$; (3) constructing a group action $G\acts\hat{X}$ such that the sub-action $S \acts \hat{X}$ is an extension of $S \acts X$.

A key aspect to consider is the choice of the group $G$ for constructing the natural extension. While for $\N^d$-actions we have an intuitive candidate in $\Z^d$, in the more general setting identifying an appropriate group $G$ becomes a fundamental challenge which raises some intrincate algebraic questions. In particular, it may not be immediately clear whether such a group even exists, or whether there is only one such candidate. It turns out that the characterization of which semigroups can be embedded into groups is complicated \cite[Chapter 12]{CliffordII} and, even in the embeddable case, there might be several choices for $G$.

The paper is organized as follows. In \S \ref{sec1}, we discuss the basic definitions about semigroups and their actions. In \S \ref{sec2}, and through Definition \ref{def:natural_extension}, we propose an abstract notion of natural extension and show an explicit construction of it, together with some of its basic properties. In \S \ref{sec3}, we demonstrate that not every surjective continuous action of a semigroup $S$ that is embeddable into a group $G$ can be extended to an action of $G$. We also discuss the \emph{free group on $S$}---in a way, the most general group and embedding that can be constructed purely out of the structure of a semigroup $S$---, and characterize it in Theorem \ref{thm:main} as the unique pair that always admits a natural extension. This characterization becomes particularly simple in the residually finite case. Furthermore, in Theorem \ref{thm:C}, we prove that the natural extension over the free $S$-group has a universal property within the category of all invertible extensions. Next, in \S \ref{sec4}, we address the particular case of \emph{left reversible} semigroups, for which the problem of choosing a receiving group simplifies. In Theorem \ref{thm:D}, we establish that the classical notion of a natural extension, which requires all other invertible extensions to factor through it, only works in the context of compact extensions of left reversible semigroup actions and fails outside of it. Finally, in \S \ref{sec5}, as an application of our framework, we prove that dynamical properties such as topological entropy and topological transitivity lift from left reversible bicancellative semigroup actions to their corresponding natural extensions.

\section{Preliminaries}
\label{sec1}

\subsection{Semigroups}

Throughout this work, we shall only deal with countable, discrete \textbf{semigroups}, that is, sets $S$ equipped with an associative binary operation. We mostly assume that they are \textbf{monoids}, i.e., have an identity element $1_S$. We refer the reader to standard textbooks on the algebraic theory of semigroups \cite{Clifford,CliffordII} for further details.

The notions of subsemigroup, generating set, semigroup homomorphism, etc., are inherited directly from their counterparts from group theory. In what follows, we shall use a superscript ${}^+$ to distinguish the semigroup versions of a concept from their group-theoretical counterparts if needed, e.g., if $B$ is a subset of a group $G$, the subsemigroup generated by a subset $B$ will be denoted by $\langle B\rangle^+$, while the subgroup generated by $B$ will be written as $\langle B\rangle$. A noteworthy exception are normal subgroups, whose semigroup-theoretical counterparts are \textbf{congruences}, equivalence relations $\mathcal{R}\subseteq S\times S$ with the property that $a\mathbin{\mathcal{R}}b$ implies both $ac\mathbin{\mathcal{R}}bc$ and $ca\mathbin{\mathcal{R}}cb$ for any $c\in S$, and naturally provide the quotient set $S/\mathcal{R}$ with a semigroup structure inherited from $S$. In the case of groups, every congruence $\mathcal{R}$ defines a normal subgroup generated by all those $st^{-1}$ with $s\mathbin{\mathcal{R}}t$, and vice versa; with this correspondence, the two notions of quotient group coincide.

An important class of semigroups we shall deal with are \textbf{free monoids}. Given a set $B$, the free monoid $\F(B)^+$ generated by $B$ is the set of all finite words (i.e., sequences of elements) of $B$, with concatenation as its associative binary operation. The empty word will be the identity element $1_{\F(B)^+}$ of this monoid; removing it provides us with a \textbf{free semigroup}. In most of our cases of interest, the distinction between free monoids and free semigroups will be irrelevant.

Free monoids, just like free groups, are characterized by their \textbf{universal property}: any function $B\to S$ where $S$ is a monoid extends uniquely to a homomorphism $\F(B)^+\to S$. From this, one can easily verify that if $\lvert B_1\rvert = \lvert B_2\rvert$, then the corresponding free monoids are isomorphic. In particular, for any finite number $d\in\N$, we talk about \textbf{the} free monoid on $d$ generators $\F_d^+ := \F(\{a_1,\dotsc,a_d\})$ for an arbitrary, fixed set of $d$ elements.

As with groups, semigroups are often described by a \textbf{semigroup presentation}: given any set $B$ of \textbf{generators} and a collection $R$ of \textbf{relations}, pairs $(u,v)$ with $u,v\in\F(B)^+$, the semigroup $\langle B\mid R\rangle^+$ will be the unique semigroup generated by $B$ where all equalities $u=v$ with $(u,v)\in R$ and their logical consequences hold true. Free semigroups correspond to presentations with no prescribed equations; more generally, $\langle B\mid R\rangle^+$ can always be seen as a quotient of the free semigroup $\F(B)^+-\{1_{\F(B)^+}\}$ by an appropriate congruence $\mathcal{R}$ which satisfies $u\mathbin{\mathcal{R}}v$ for all $(u,v)\in R$. Similar considerations are true for monoids.

\subsection{Semigroup actions}

Given a monoid $S$ and a set $X$, an $S$\textbf{-action} (or simply an \textbf{action} if the monoid $S$ is understood) is a function $\alpha\colon S\times X\rightarrow X$ such that $\alpha(1_S,x)=x$ and $\alpha(s,\alpha(t,x))=\alpha(st,x)$ for all $x \in X$ and $s,t\in S$. Most of the time an $S$-action will be written as $S\acts X$ (or $S\overset{\alpha}{\acts} X$, if we want to remark the function $\alpha$), the function $\alpha(s,\cdot)\colon X\rightarrow X$ as $\alpha_s$ or $s$, and an element $\alpha(s,x)$ as $s \cdot x$. An $S$-action $S\acts X$ is said to be \textbf{surjective} if for every $s\in S$ the function $\alpha_s$ is surjective. An $S$-\textbf{invariant} subset $A\subseteq X$ for an action $S\overset{\alpha}{\acts}X$ is a set such that $sA:=\{s\cdot x:x\in A\}\subseteq A$ for all $s\in S$. A \textbf{continuous $S$-action} (or simply \textbf{continuous action}, if $S$ is implicit) will be an action $S\acts X$ such that $X$ is a topological space and $\alpha_s$ is continuous for each $s\in S$.

Given monoids $S$ and $T$, two continuous actions $S\overset{\alpha}{\acts} X$ and $T\overset{\beta}{\acts} Y$, and a monoid morphism $\sgmorph\colon S\to T$, a \textbf{$\sgmorph$-morphism} from $\beta$ to $\alpha$ will be a continuous function $\varphi\colon Y\rightarrow X$ that is $\sgmorph$\textbf{-equivariant}, i.e., such that
$s\cdot \varphi(y) = \varphi(\sgmorph(s)\cdot y)$ \ for all $y\in Y$ and $s\in S$. If $S=T$ and $\sgmorph$ is the identity map, we omit any mention of $\sgmorph$ and just say that $\varphi$ is a morphism. A surjective morphism will be called a \textbf{factor map}. In this case, $\alpha$ is said to be a \textbf{factor} of $\beta$, and $\beta$ to be an \textbf{extension} of $\alpha$. If the factor map $\varphi$ is a homeomorphism, we will speak of a \textbf{conjugacy}, and say both actions are \textbf{conjugate} to each other.

\subsection{Shift actions}

Shift actions will be of a crucial importance throughout this work. Given a semigroup $S$ and a topological space $\Omega$, we consider the product space $\Omega^S$ of all functions $x\colon S \to \Omega$ endowed with the product topology, i.e., the smallest topology making the coordinate projections $\pi_t\colon \Omega^S \to \Omega$ continuous for all $t \in S$. Note that the convergence in $\Omega^S$ is characterized by $x_n \to x \iff x_n(t)\to x(t)$ for all $t\in S$. It is important to notice that the space $\Omega^S$ inherits many topological properties from $\Omega$ such as compactness and Polishness (i.e., complete metrizability plus separability).

There is a natural continuous and surjective action $S \acts \Omega^S$ given by
$$
(s\cdot x)(t)=x(ts) \quad  \text{ for all } s,t\in S,
$$
which will be called the \textbf{shift action} and denoted by $\sigma$. We will say that an $S$-invariant subset $X \subseteq \Omega^S$ is \textbf{surjective} whenever the induced action $S \overset{\sigma}{\acts} X$ is surjective. A closed $S$-invariant subset $X \subseteq \Omega^S$ will be called an \textbf{$S$-subshift}. 

An important special case will be the symbolic one, where $\Omega$ is a countable set $\mathcal{A}$ endowed with the discrete topology. Such a space will be called an \textbf{alphabet}. The space of configurations $\mathcal{A}^S$ together with the action $S\acts \mathcal{A}^S$ is called the \textbf{full} $S$-\textbf{shift}. If $\mathcal{A}$ is countably infinite, then $\mathcal{A}^S$ is a Baire space (completely metrizable, totally disconnected, and without isolated points). If $\mathcal{A}$ is finite and $|\mathcal{A}| \geq 2$, then $\mathcal{A}^S$ is a Cantor space.

\section{Natural extensions}
\label{sec2}

A first step for building a natural extension of a semigroup action is to embed the given semigroup $S$ into a group $G$. We will say $S$ is \textbf{embeddable} whenever it admits an embedding $\eta\colon S\rightarrow G$, that is, an injective semigroup morphism, to a group $G$. In this case, we say that the pair $\receivingSgroup = (G,\eta)$ is a \textbf{receiving group}. Moreover, if $G=\langle \eta(S)\rangle$, we shall say that $\receivingSgroup$ is a \textbf{receiving $S$-group}; this is a particular case of the more general notion of \emph{$S$-group} discussed in \S \ref{section4-1}. Note that the embedding $\eta$ is an important part of the definition of receiving group; we shall see later that there are examples of groups $G$ that allow for different embeddings of the same semigroup $S$, which are non-equivalent in a way that is relevant to our concept of natural extension. Additionally, 
 we will say that a semigroup $S$ is \textbf{bicancellative} if for every $a,b,c\in S$, $ab=ac$ or $ba=ca$ imply that $b=c$. It is clear that a necessary condition for a semigroup to be embedded in a group is to be bicancellative. In a sense, bicancellative semigroups stand close to groups. In particular, a finite bicancellative semigroup must be a group.

\subsection{Definition and basic properties}

The following concept lies at the core of this work.

\begin{defn}
\label{def:natural_extension}
    Let $S$ be an embeddable monoid, $S\overset{\alpha}{\acts}X$ a continuous action and $\receivingSgroup = (G,\eta)$ a receiving group for $S$. An \textbf{(invertible)} $\receivingSgroup$-\textbf{extension} of $\alpha$ is a tuple $\Gextension = (\GextensionSet,\beta,\genextmap)$, where $\beta$ is a continuous action $G\acts \GextensionSet$ and $\genextmap\colon \GextensionSet\rightarrow X$, the $\receivingSgroup$\textbf{-extension map}, is a surjective $\eta$-morphism, that is, $\tau$ is continuous and
        $$
        s\cdot \genextmap(\hat{x}) = \genextmap(\eta(s) \cdot \hat{x}) \quad \text{ for all } \hat{x}\in \hat{X} \text{ and } s\in S.
        $$
      A $\receivingSgroup$-extension $\Gextension$ of $\alpha$ is said to be \textbf{natural} if for any other $\receivingSgroup$-extension $\Gextension' = (\GextensionSet',\beta',\genextmap')$ of $\alpha$ there is a unique $\receivingSgroup$-equivariant continuous function $\varphi\colon \GextensionSet' \rightarrow \GextensionSet$ satisfying $\genextmap\circ \varphi=\genextmap'$, as seen in the following commutative diagram.
$$
\begin{tikzcd}[ampersand replacement=\&]
\GextensionSet' \arrow[r, "\varphi", dashed] \arrow[rd, "\genextmap'"']  \&  \GextensionSet \arrow[d, "\genextmap"] \\ 
                                                    \&  X     
\end{tikzcd}
$$
\end{defn}

A \textbf{morphism} between two $\receivingSgroup$-extensions $\Gextension' = (\GextensionSet',\beta',\genextmap')$ and $\Gextension = (\GextensionSet,\beta,\genextmap)$ of a continuous action $S\overset{\alpha}{\acts} X$ is a continuous, $G$-equivariant function $\varphi\colon \GextensionSet'\to \GextensionSet$ such that $\genextmap\circ\varphi=\genextmap'$. In particular, the unique map $\GextensionSet' \to \GextensionSet$ granted by the definition of $\receivingSgroup$-natural extension is a morphism of $\receivingSgroup$-extensions.

\begin{remark}
Using the language of category theory, any continuous $S$-action $S\overset{\alpha}{\acts} X$ defines an appropriate category of all possible $\receivingSgroup$-extensions, where the arrows are morphisms of $\receivingSgroup$-extensions, as defined above; in such a category, the natural extension is a \emph{terminal object} and the existence and uniqueness of the maps $\varphi$ in its definition is what we call a \emph{universal property}; furthermore, this ensures that the natural $\receivingSgroup$-extension is unique up to an isomorphism of $\receivingSgroup$-extensions, if it exists. Thus, with a small abuse of language, we can refer to \emph{the} natural $\receivingSgroup$-extension of $S\overset{\alpha}{\acts} X$.
\end{remark}

\begin{remark}
In Definition \ref{def:natural_extension}, one would expect the natural extension $\Gextension$ to be the ``smallest'' $\receivingSgroup$-extension of the $S$-action $\alpha$. An intuitive way to formalize this idea would be to prove that the unique morphism of $\receivingSgroup$-extensions $\varphi\colon \GextensionSet' \to \GextensionSet$ induced by another extension $\Gextension'$ is surjective, thus coinciding with the classical notion of natural extension, but it turns out this is not always the case. In \S\ref{sec4}, we discuss conditions to ensure this always happens.
\end{remark}

Consider a receiving group $\receivingSgroup = (G,\eta)$ and a continuous action $S\overset{\alpha}{\acts} X$. If the natural $\receivingSgroup$-extension of this action exists, we can regard it as a concrete object. Consider the set 
$$X_{\receivingSgroup}=\left\{(x_h)_{h\in G}\in  X^G : s\cdot x_h=x_{\eta(s)h}\textup{ for all }s\in S\textup{ and }h\in G\right\},$$
endowed with the subspace topology of the product topology. The function $\pi\colon X_{\receivingSgroup}\rightarrow X$ will be the projection $\pi_{1_G}\colon X^G\to X$ restricted to $X_{\receivingSgroup}$. The action $G \acts X_{\receivingSgroup}$ will be the restriction of the shift action $G \overset{\sigma}{\acts}X^G$.

\begin{proposition}
    The space $X_{\receivingSgroup}$ is a $G$-invariant subset of $X^G$, and $\pi$ is an $\eta$-morphism. In particular, if $X$ is Hausdorff, then $X_{\receivingSgroup}$ is a $G$-subshift of $X^G$.
\end{proposition}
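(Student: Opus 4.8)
The plan is to verify three things in sequence: that $X_{\receivingSgroup}$ is $G$-invariant under the shift, that $\pi$ is an $\eta$-morphism, and then to deduce the subshift conclusion from the Hausdorff hypothesis.

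First I would check $G$-invariance. Take $(x_h)_{h \in G} \in X_{\receivingSgroup}$ and an element $g \in G$; I need to show $\sigma_g\big((x_h)_h\big) = (x_{hg})_h$ again lies in $X_{\receivingSgroup}$, i.e.\ that it satisfies the defining constraint. Writing $y_h = x_{hg}$, the requirement is $s \cdot y_h = y_{\eta(s)h}$ for all $s \in S$ and $h \in G$. Substituting the definition of $y$, this reads $s \cdot x_{hg} = x_{\eta(s)hg}$. But this is precisely the original constraint for the point $(x_h)_h$ evaluated at the index $hg$ in place of $h$, which holds by assumption. So the constraint is stable under right translation of indices, and invariance follows immediately.

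Next I would verify that $\pi = \pi_{1_G}|_{X_{\receivingSgroup}}$ is an $\eta$-morphism, meaning $s \cdot \pi(y) = \pi(\eta(s) \cdot y)$ for all $s \in S$ and $y \in X_{\receivingSgroup}$. For $y = (x_h)_h$ we have $\pi(y) = x_{1_G}$, so the left side is $s \cdot x_{1_G}$. For the right side, $\eta(s) \cdot y = \sigma_{\eta(s)}(y)$ has $h$-coordinate $x_{h\eta(s)}$, so its projection to $1_G$ is $x_{\eta(s)}$. Here I must be careful about which convention the shift uses: the shift action is $(g \cdot x)(t) = x(tg)$, so the $1_G$-coordinate of $\eta(s) \cdot y$ is $x(1_G \cdot \eta(s)) = x_{\eta(s)}$. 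Meanwhile the defining constraint with $h = 1_G$ gives exactly $s \cdot x_{1_G} = x_{\eta(s) \cdot 1_G} = x_{\eta(s)}$, so both sides agree. Continuity of $\pi$ is inherited from the continuity of the coordinate projection, and surjectivity needs a brief argument using the fact that $\alpha$ is surjective (or at least that the constraint can be solved), though I note the proposition as stated only asserts $\pi$ is an $\eta$-morphism, not necessarily surjective.

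Finally, for the last sentence I would argue that when $X$ is Hausdorff, $X_{\receivingSgroup}$ is closed in $X^G$, hence a $G$-subshift. The set is defined by the conditions $s \cdot x_h = x_{\eta(s)h}$, i.e.\ by the equalities $\alpha_s \circ \pi_h = \pi_{\eta(s)h}$ as continuous maps $X^G \to X$; each such condition cuts out the set where two continuous $X$-valued functions agree, which is closed precisely because $X$ is Hausdorff (the diagonal in $X \times X$ is closed). Taking the intersection over all $s \in S$ and $h \in G$ keeps the set closed, and combined with the $G$-invariance already established, $X_{\receivingSgroup}$ is a closed $G$-invariant subset of $X^G$, which is the definition of a $G$-subshift. \textbf{The main subtlety} is bookkeeping the left/right conventions in the shift action and the defining constraint so that the index arithmetic lines up correctly; the topological closedness argument is routine once Hausdorffness is invoked to close the equalizer sets.
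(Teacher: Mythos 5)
Your proof is correct and follows essentially the same route as the paper: index substitution for $G$-invariance, the same coordinate computation at $1_G$ for $\eta$-equivariance of $\pi$, and the closed-diagonal (equalizer) argument for closedness in the Hausdorff case. Your parenthetical worry about surjectivity is rightly dismissed, since the proposition only asserts that $\pi$ is an $\eta$-morphism.
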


\begin{proof}
In what follows, let $\overline{x}=(x_h)_{h\in G}\in X_{\receivingSgroup}$, $g,h\in G$ and $s\in S$ denote arbitrary elements. To see that $X_{\receivingSgroup}$ is $G$-invariant, note that $s\cdot (g\cdot \overline{x})_h=s\cdot x_{hg}=x_{\eta(s)hg}=(g\cdot \overline{x})_{\eta(s)h}$. Thus, $g\cdot \overline{x}\in X_{\receivingSgroup}$, so $X_{\receivingSgroup}$ is $G$-invariant. Also,
$$s\cdot \pi(\overline{x}) = s\cdot x_{1_G} = x_{\eta(s)} = \pi((x_{h\eta(s)})_{h\in G}) = \pi(\eta(s)\cdot \overline{x}),$$
whence $\pi$ is $\eta$-equivariant. Since this map is continuous, as it is the restriction of the continuous function $\pi_{1_S}\colon X^G\to X$, we conclude that $\pi$ is an $\eta$-morphism.

Finally, if $X$ is Hausdorff, let $f_{s,h}\colon X^G\to X\times X$ be the continuous function given by $f_{s,h}(\overline{x})=(s\cdot \pi_h(\overline{x}),\pi_{\eta(s)h}(\overline{x}))$. Then, the set $\{\overline{x}\in X^G:s\cdot \pi_h(\overline{x})=\pi_{\eta(s)h}(\overline{x})\}=f^{-1}(\Delta_X)$ is closed, as $\Delta_X\subseteq X\times X$ is closed. Since $X_{\receivingSgroup}$ can be written as
$$X_{\receivingSgroup}=\bigcap_{h\in G,s\in S}\left\{\overline{x}\in X^G:s\cdot \pi_h(\overline{x})=\pi_{\eta(s)h}(\overline{x})\right\},$$
we get that $X_{\receivingSgroup}$ is closed in $X^G$.
\end{proof}

The set $X_{\receivingSgroup}$, equipped with the shift action, is constructed as a prime candidate to be the natural extension of $S\overset{\alpha}{\acts}X$. However, it is not clear whether this set is non-empty, much less if the projection map is surjective. The latter condition turns out to be a necessary and sufficient condition for the natural extension to exist:

\begin{proposition}\label{ext_functor1}
    Let $S\overset{\alpha}{\acts} X$ be a continuous $S$-action and $\G$ a receiving group. The following are equivalent.
    \begin{enumerate}
        \item[\textup{(i)}] The map $\pi\colon X_{\receivingSgroup}\rightarrow X$ is surjective, and in particular $X_{\receivingSgroup} \neq \varnothing$.
        \item[\textup{(ii)}] The natural $\receivingSgroup$-extension of $\alpha$ exists.
        \item[\textup{(iii)}] There exists a $\receivingSgroup$-extension of $\alpha$.
    \end{enumerate}
    When these conditions hold, the natural $\receivingSgroup$-extension of $\alpha$ is isomorphic to $(X_{\G},\sigma,\pi)$. 
\end{proposition}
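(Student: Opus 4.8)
The plan is to prove the cycle of implications (iii) $\Rightarrow$ (i) $\Rightarrow$ (ii) $\Rightarrow$ (iii), and then separately verify that when these hold, the concrete object $(X_{\receivingSgroup}, \sigma, \pi)$ realizes the abstract natural extension. The implication (ii) $\Rightarrow$ (iii) is immediate, since a natural $\receivingSgroup$-extension is in particular a $\receivingSgroup$-extension. The implication (i) $\Rightarrow$ (ii) will follow once we establish that $(X_{\receivingSgroup}, \sigma, \pi)$ satisfies the universal property of Definition~\ref{def:natural_extension}, so the real content is split between (iii) $\Rightarrow$ (i) and the verification of the universal property.

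\medskip

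\emph{Proof of (iii) $\Rightarrow$ (i).} Suppose some $\receivingSgroup$-extension $\Gextension' = (\GextensionSet', \beta', \genextmap')$ exists. The key idea is that any point $y \in \GextensionSet'$ induces a point of $X_{\receivingSgroup}$ by ``reading off'' its $G$-orbit through $\genextmap'$. Concretely, I would define $\Phi\colon \GextensionSet' \to X^G$ by $\Phi(y) = (\genextmap'(h \cdot y))_{h \in G}$ and check that $\Phi(y)$ actually lands in $X_{\receivingSgroup}$: using the $\receivingSgroup$-equivariance relation $s \cdot \genextmap'(z) = \genextmap'(\eta(s) \cdot z)$ applied with $z = h \cdot y$, one gets $s \cdot \Phi(y)_h = s \cdot \genextmap'(h \cdot y) = \genextmap'(\eta(s) h \cdot y) = \Phi(y)_{\eta(s) h}$, which is exactly the defining condition of $X_{\receivingSgroup}$. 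Evaluating the first coordinate gives $\pi(\Phi(y)) = \genextmap'(1_G \cdot y) = \genextmap'(y)$, so $\pi \circ \Phi = \genextmap'$. Since $\genextmap'$ is surjective by hypothesis, $\pi$ must be surjective as well, giving (i) (and in particular $X_{\receivingSgroup} \neq \varnothing$).

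\medskip

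\emph{Proof of (i) $\Rightarrow$ (ii) and the final identification.} Assuming $\pi$ surjective, the previous proposition already guarantees $(X_{\receivingSgroup}, \sigma, \pi)$ is a $\receivingSgroup$-extension (it is $G$-invariant and $\pi$ is an $\eta$-morphism; surjectivity is now assumed). To see it is \emph{natural}, I take an arbitrary $\receivingSgroup$-extension $\Gextension'$ and construct the map $\varphi = \Phi$ above, now viewed as a map $\GextensionSet' \to X_{\receivingSgroup}$. I must verify three things: that $\varphi$ is continuous, that it is $\receivingSgroup$-equivariant, and that it is the unique such map satisfying $\pi \circ \varphi = \genextmap'$. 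Continuity follows from Lemma~\ref{continuity_lemma}: each component $\pi_h \circ \varphi = \genextmap' \circ \beta'_h$ is a composition of continuous maps. For equivariance, I would compute $(g \cdot \Phi(y))_h = \Phi(y)_{hg} = \genextmap'(hg \cdot y) = \Phi(g \cdot y)_h$, so $\Phi(g \cdot y) = g \cdot \Phi(y)$. The relation $\pi \circ \varphi = \genextmap'$ was already checked.

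\medskip

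The step I expect to be the main obstacle is \textbf{uniqueness}, since everything else is a direct unwinding of definitions. The argument is that any morphism $\psi\colon \GextensionSet' \to X_{\receivingSgroup}$ with $\pi \circ \psi = \genextmap'$ is forced to coincide with $\varphi$: for each $h \in G$, $G$-equivariance of $\psi$ gives $\psi(y)_h = \pi_h(\psi(y)) = \pi_{1_G}(h \cdot \psi(y)) = \pi(\psi(h \cdot y)) = \genextmap'(h \cdot y) = \varphi(y)_h$, where the identity $\pi_h = \pi_{1_G} \circ \sigma_h$ on $X^G$ is what pins down every coordinate. Since $h$ was arbitrary, $\psi(y) = \varphi(y)$ for all $y$, establishing uniqueness and hence the universal property. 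This completes (ii), and because the natural extension is a terminal object (hence unique up to isomorphism of $\receivingSgroup$-extensions, as noted in the remark following Definition~\ref{def:natural_extension}), it is isomorphic to $(X_{\receivingSgroup}, \sigma, \pi)$, giving the final assertion.
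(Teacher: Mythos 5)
Your proposal is correct and follows essentially the same route as the paper: both construct the map $\varphi(\hat{x})_h=\genextmap'(h\cdot \hat{x})$, verify it lands in $X_{\receivingSgroup}$ via $\eta$-equivariance, obtain continuity from Lemma~\ref{continuity_lemma}, derive uniqueness by the forced coordinate formula $\psi(y)_h=\pi(\psi(h\cdot y))=\genextmap'(h\cdot y)$, and get (iii)~$\Rightarrow$~(i) by composing with the surjection $\genextmap'$. The only cosmetic difference is that the paper establishes the identification with $(X_{\receivingSgroup},\sigma,\pi)$ directly inside the proof of (i)~$\Rightarrow$~(ii), whereas you phrase it as a consequence of terminality; these are the same argument.
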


\begin{proof}
    Assume (i). We will prove that $(X_{\receivingSgroup},\sigma,\pi)$ is the natural $\receivingSgroup$-extension of $\alpha$, hence showing (ii) and the final statement of the proposition at once. Let $(\GextensionSet,\beta,\genextmap)$ be a $\receivingSgroup$-extension of $\alpha$. We need to show there is a unique $G$-equivariant continuous function $\varphi\colon \GextensionSet\rightarrow X_{\receivingSgroup}$ satisfying $\pi\circ \varphi=\genextmap$. Note that, if $\varphi$ exists, by $G$-equivariance and the fact that $\pi\circ \varphi=\genextmap$, the map $\varphi$ must satisfy for each $\hat{x}\in \GextensionSet$ and $h\in G$ the following:
    $$\varphi(\hat{x})_h=(h\cdot \varphi(\hat{x}))_{1_G}=\pi(\varphi(h\cdot \hat{x}))=\genextmap(h\cdot \hat{x}).$$
    The constraints imposed upon $\varphi$ show that the formula $\varphi(\hat{x})_h=\genextmap(h\cdot \hat{x})$ is the only possible definition for this function, so if $\varphi$ is well-defined in this way, it is uniquely defined. Since for all $s\in S$, $h\in G$ and $\hat{x}\in \GextensionSet$ we have 
    $$s\cdot \varphi(\hat{x})_h=s\cdot \genextmap(h\cdot \hat{x})=\genextmap(\eta(s)h\cdot \hat{x})=\varphi(\hat{x})_{\eta(s)h},$$
    we conclude that $\varphi$ is indeed well-defined. The definition of $\varphi$ also implies that $\pi_h\circ \varphi=\genextmap\circ \alpha_h$ is continuous for every $h\in G$, so $\varphi$ is continuous. In addition, $\varphi$ is $G$-equivariant: by definition of the shift action we have, for all $h,g\in G$,
    $$\varphi(g\cdot \hat{x})_h=\genextmap(hg\cdot \hat{x})=\varphi(\hat{x})_{hg}=(g\cdot \varphi(\hat{x}))_h.$$
    We conclude that $(X_{\receivingSgroup},\sigma,\pi)$ is the natural $\receivingSgroup$-extension. That (ii) implies (iii) is clear. For the remaining implication, that is, (iii) implies (i), let $\Gextension = (\GextensionSet,\beta,\tau)$ be a $\receivingSgroup$-extension of $\alpha$. Define $\varphi\colon \GextensionSet\to X_\receivingSgroup$ by $\hat{x}\mapsto(\genextmap(h\cdot \hat{x}))_{h\in G}$ as above, and let $x\in X$ be an arbitrary element. As $\genextmap$ is surjective, there exists some $\hat{x}\in \GextensionSet$ such that $\genextmap(\hat{x})=x$; then, $\varphi(\hat{x})$ necessarily satisfies that $\pi(\varphi(\hat{x}))=\varphi(\hat{x})_{1_G}=\genextmap(1_G\cdot \hat{x})=x$, and thus $\varphi(\hat{x})$ is a preimage of $x$ in $X_{\G}$, hence $\pi$ is surjective.
\end{proof}

By virtue of Proposition \ref{ext_functor1}, we will be henceforth interested in studying $(X_{\receivingSgroup},\sigma,\pi)$, which will be denoted by $\naturalGextension$. This motivates the following definitions.

\begin{definition}
    Let $S\overset{\alpha}{\acts}X$ be a continuous $S$-action and $\G$ a receiving group.If $\pi\colon X_{\receivingSgroup}\rightarrow X$ is surjective, $\alpha$ will be said to be
        $\G$\textbf{-extensible}.
\end{definition}

From Definition \ref{def:natural_extension}, one may verify that if $S\acts X$ and $S\acts X'$ are conjugate actions, then $S\acts X$ is $\receivingSgroup$-extensible if and only if $S\acts X'$ is, and the extensions are conjugate as well.

It is clear that a necessary condition for a continuous action $S\overset{\alpha}{\acts}X$ to be $\G$-extensible is that $\alpha$ is a surjective action. This condition is not sufficient: for certain choices of $S$ and receiving $S$-groups $\receivingSgroup$, there exists surjective continuous $S$-actions on a set $X$ for which the set $X_{\receivingSgroup}$ is empty (see Proposition \ref{prop:non_extensible_subshift}).

\begin{remark}[$\receivingSgroup$-extension functor]\label{rem:extensions_are_functorial}
    Any $S$-equivariant map $\varphi\colon X\rightarrow Y$ of  $\receivingSgroup$-extensible $S$-actions induces a unique $G$-equivariant map $\varphi_\receivingSgroup\colon X_\receivingSgroup\rightarrow Y_\receivingSgroup$ such that $\pi'\circ \varphi_\receivingSgroup=\varphi\circ \pi$. Said function is defined by $\varphi_\receivingSgroup((x_h)_{h\in G})=(\varphi(x_{h}))_{h\in G}$, which is clearly continuous if $\varphi$ is continuous. The construction is functorial: if $\varphi\colon X\rightarrow Y$ and $\psi\colon Y\rightarrow Z$ are $S$-equivariant maps, and $\pi_X,\pi_Y,\pi_Z$ are the respective $\receivingSgroup$-extension maps, then $(\psi\circ \varphi)_\receivingSgroup=\psi_\receivingSgroup\circ \varphi_\receivingSgroup$, since the latter is $G$-equivariant and satisfies $\pi_Z\circ (\psi_\receivingSgroup\circ \varphi_\receivingSgroup)=(\psi\circ \varphi)\circ \pi_X$. 
\end{remark}

\begin{remark}\label{rem:natural-ext-of-group-action}
    Let $S$ be an embeddable semigroup, $\receivingSgroup=(G,\eta)$ a receiving $S$-group , and $G\overset{\bar{\alpha}}\acts X$ a $G$-action. Restricting $\bar{\alpha}$ to the subsemigroup $\eta(S)$ induces an action $\eta(S)\overset{\alpha}\acts X$ whose natural $\receivingSgroup$-extension must be conjugate to the original action $\bar{\alpha}$. Indeed, if $(X_{\receivingSgroup},\sigma,\pi)$ is the natural extension of $\alpha$ and $(x_g)_{g\in G}\in X_{\receivingSgroup}$, for any $s \in S$ the element $x_{\eta(s)^{-1}}$ must satisfy $\eta(s)\cdot x_{\eta(s)^{-1}}=x_{1_G}$ and is thus equal to $\alpha_{\eta(s)}^{-1}(x_{1_G})$, by bijectivity of $\alpha_{\eta(s)}$. As $\alpha$ is a $G$-action, we must have then that $\alpha_{\eta(s)}^{-1}=\alpha_{\eta(s)^{-1}}$. From here, since $\eta(S)\cup \eta(S)^{-1}$ generate the whole of $G$ as a semigroup, we deduce that $x_g$ must equal $\alpha_g(x)$ for any $g\in G$, and thus $x\mapsto (\alpha_g(x))_{g\in G}$ is a continuous, $G$-equivariant inverse for $\pi$, hence a conjugacy.
\end{remark}

\subsection{Representations of natural extensions by subshifts}\label{symbolic}
We proceed to give a representation of the natural $\receivingSgroup$-extension. Let us start by reducing the more general ambit of $S$-actions to the case of shift actions.

\begin{proposition}
\label{prop:conjsubshift}
    For a monoid $S$, every continuous action $S\acts X$ is conjugate to an $S$-subshift of $X^S$.
\end{proposition}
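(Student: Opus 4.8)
The plan is to realize $X$ inside $X^S$ via its own orbit structure. Define the \textbf{orbit map} $\Phi\colon X\to X^S$ by $\Phi(x)=(t\cdot x)_{t\in S}$, that is, $\Phi(x)(t)=\alpha_t(x)$ for each $t\in S$, and I claim this is a conjugacy onto its image. The essential point is that the action axiom $\alpha_t\circ\alpha_s=\alpha_{ts}$ is precisely what makes $\Phi$ intertwine $\alpha$ with the shift action $\sigma$, while the presence of an identity in $S$ forces $\Phi$ to be injective.

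First I would verify $S$-equivariance. For $s,t\in S$ and $x\in X$,
$$\Phi(s\cdot x)(t)=t\cdot(s\cdot x)=(ts)\cdot x=\Phi(x)(ts)=(s\cdot\Phi(x))(t),$$
where the last equality is the definition of $\sigma$; hence $\Phi(s\cdot x)=s\cdot\Phi(x)$. Continuity of $\Phi$ is then immediate from Lemma~\ref{continuity_lemma}, since its $t$-th component $\pi_t\circ\Phi=\alpha_t$ is continuous because $\alpha$ is a continuous action.

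Next I would show $\Phi$ is a homeomorphism onto its image. As $S$ is a monoid, evaluating at the identity yields $\pi_{1_S}\circ\Phi=\alpha_{1_S}=\mathrm{id}_X$, so $\Phi$ is injective and, more strongly, admits the continuous left inverse $\pi_{1_S}\rvert_{\Phi(X)}$. Thus $\Phi\colon X\to\Phi(X)$ is a continuous bijection with continuous inverse, i.e., an embedding. By the equivariance already established, $\Phi(X)$ is $S$-invariant, so the sub-action $S\acts\Phi(X)$ is well-defined and $\Phi$ realizes a conjugacy from $\alpha$ onto it.

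It remains to confirm that $\Phi(X)$ is a genuine subshift, i.e., closed. Evaluating the defining relation at $1_S$ identifies the image as
$$\Phi(X)=\{y\in X^S: y_t=t\cdot y_{1_S}\text{ for all }t\in S\}=\bigcap_{t\in S}\{y\in X^S:\pi_t(y)=\alpha_t(\pi_{1_S}(y))\}.$$
Each set in this intersection is the equalizer of two continuous maps $X^S\to X$, hence closed when $X$ is Hausdorff, exactly as in the closedness argument given for $X_{\receivingSgroup}$. I expect this closedness step to be the only real obstacle: equivariance, continuity, and the embedding property hold over any topological space, but identifying $\Phi(X)$ as a \emph{closed} invariant set genuinely uses that $X$ is Hausdorff, in whose absence one obtains conjugacy merely to an $S$-invariant subspace rather than to a subshift.
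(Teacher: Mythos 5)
Your proof is correct and is essentially the paper's own argument: the same orbit map $x\mapsto (t\cdot x)_{t\in S}$, with the same equivariance check, the same componentwise continuity via Lemma~\ref{continuity_lemma}, the same inverse $\pi_{1_S}$ coming from the monoid identity, and the same identification of the image as an intersection of equalizers of continuous maps. Your explicit remark that the closedness step (and hence the ``subshift'' conclusion) genuinely requires $X$ to be Hausdorff is a point the paper leaves implicit---its own equalizer argument needs a Hausdorff target just as much---so this is a welcome clarification rather than a deviation.
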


\begin{proof}
    Consider the set $Y = \{y \in X^S:s\cdot y(t)=y(st)\text{ for all } s,t\in S\}\subseteq X^S$. The subset $Y$ is closed, since $Y = \bigcap_{s,t \in S} \{y\in X^S: (s \circ \pi_t)(y) = \pi_{st}(y)\}$, so it can be seen as an intersection of sets where two continuous functions coincide, and the restriction of the continuous action $S\acts X^S$ yields a continuous action $S\acts Y$. Define $\varphi\colon X\to Y$ by $\varphi(x)(t)=t\cdot x$, which is well-defined, as $s \cdot \varphi(x)(t) = s \cdot (t \cdot x) = (st) \cdot x = \varphi(x)(st)$, and continuous, as every $\pi_s\circ \varphi$ is continuous as a consequence of the continuity of both the action $S\acts X$ and the projection $\pi_s\colon Y\to X$. Since we are assuming $S$ is a monoid, the projection $\pi_{1_S}\colon Y \to X$ exists and is the inverse of $\varphi$, and the $S$-equivariance of $\varphi$ follows from the $S$-equivariance of $\pi_{1_S}$.
\end{proof}

    Let $\Omega$ be a topological space and let $\rho\colon \Omega^G\rightarrow \Omega^S$ be the map given by $x \in \Omega^G \mapsto 
 x \circ \eta \in \Omega^S$, which can be understood as the restriction of $x$ to a copy of $S$ in $G$, so that $(\Omega^G,\rho)$ is a $\receivingSgroup$-extension of $\Omega^S$. We will identify this $\receivingSgroup$-extension with the topological natural $\receivingSgroup$-extension by exhibiting an isomorphism of extensions, which will be useful later.
    
    Define $\varphi\colon (\Omega ^S)_{\receivingSgroup}\rightarrow \Omega ^G$ as the function sending an element $\overline{x}=(\overline{x}_h)_{h\in G}\in (\Omega^S)_{\receivingSgroup}$ to the configuration given by $\varphi(\overline{x})(h)=\overline{x}_h(1_S)$ for all $h\in G$.

    \begin{proposition}
        The map $\varphi$ is an isomorphism of the $\receivingSgroup$-extensions $((\Omega^S)_{\receivingSgroup},\sigma,\pi)$ and $(\Omega^G,\sigma,\rho)$, that is, a conjugacy between $(\Omega^S)_{\receivingSgroup}$ and $\Omega^G$ such that $\rho\circ \varphi=\pi$.
    \end{proposition}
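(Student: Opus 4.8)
The plan is to produce an explicit inverse for $\varphi$ and then verify that $\varphi$ intertwines every piece of structure in play. First I would unpack the defining relation of $(\Omega^S)_{\receivingSgroup}$ in coordinates: an element $\overline{x}=(\overline{x}_h)_{h\in G}$ belongs to $(\Omega^S)_{\receivingSgroup}$ exactly when the relation $s\cdot \overline{x}_h = \overline{x}_{\eta(s)h}$, rewritten via the shift action on $\Omega^S$, reads $\overline{x}_h(ts)=\overline{x}_{\eta(s)h}(t)$ for all $s,t\in S$ and $h\in G$. Specializing $t=1_S$ gives the key identity $\overline{x}_h(s)=\overline{x}_{\eta(s)h}(1_S)$, which I expect to drive every subsequent verification, while specializing $s=1_S$ together with $\eta(1_S)=1_G$ confirms consistency.

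Guided by this identity, I would define $\psi\colon \Omega^G\to (\Omega^S)_{\receivingSgroup}$ by $\psi(x)_h(t)=x(\eta(t)h)$ and check that $\psi(x)$ genuinely satisfies the defining relation: both $\psi(x)_h(ts)$ and $\psi(x)_{\eta(s)h}(t)$ equal $x(\eta(t)\eta(s)h)$ since $\eta$ is a homomorphism. The mutual-inverse statements then follow from the key identity: $\varphi(\psi(x))(h)=\psi(x)_h(1_S)=x(\eta(1_S)h)=x(h)$ because $\eta(1_S)=1_G$, and $\psi(\varphi(\overline{x}))_h(t)=\varphi(\overline{x})(\eta(t)h)=\overline{x}_{\eta(t)h}(1_S)=\overline{x}_h(t)$ directly by the identity. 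Hence $\varphi$ is a bijection with inverse $\psi$.

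For continuity I would invoke Lemma \ref{continuity_lemma} for both maps. The component $\pi_h\circ\varphi$ sends $\overline{x}\mapsto\overline{x}_h(1_S)$, which is the coordinate projection $(\Omega^S)_{\receivingSgroup}\to\Omega^S$ at $h$ followed by evaluation at $1_S$, hence continuous; symmetrically each component of $\psi$ is the projection $x\mapsto x(\eta(t)h)$, so $\psi$ is continuous and $\varphi$ is a homeomorphism. It then remains to settle the two algebraic compatibilities. The $G$-equivariance is immediate from the shift definitions: $\varphi(g\cdot\overline{x})(h)=(g\cdot\overline{x})_h(1_S)=\overline{x}_{hg}(1_S)=\varphi(\overline{x})(hg)=(g\cdot\varphi(\overline{x}))(h)$. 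Finally, $\rho\circ\varphi=\pi$ reduces to $\varphi(\overline{x})(\eta(s))=\overline{x}_{\eta(s)}(1_S)=\overline{x}_{1_G}(s)=\pi(\overline{x})(s)$, once more by the key identity with $h=1_G$.

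I do not anticipate a genuine obstacle here; the only real content is guessing the correct formula $\psi(x)_h(t)=x(\eta(t)h)$ and recognizing that the single specialized relation $\overline{x}_h(s)=\overline{x}_{\eta(s)h}(1_S)$ simultaneously yields the well-definedness of $\psi$, the inverse identities, and the equality $\rho\circ\varphi=\pi$, with the homomorphism property of $\eta$ and $\eta(1_S)=1_G$ supplying the remaining bookkeeping.
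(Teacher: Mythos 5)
Your proposal is correct and follows essentially the same route as the paper: your $\psi(x)_h(t)=x(\eta(t)h)$ is exactly the paper's formula for $\varphi^{-1}$, and the well-definedness, mutual-inverse, $G$-equivariance, and $\rho\circ\varphi=\pi$ computations all coincide with the paper's (indeed you verify $\rho\circ\varphi=\pi$ more explicitly than the paper, which merely asserts it). The one place you diverge is the homeomorphism step: the paper checks continuity of $\varphi$ by computing preimages of subbasic open sets $[U;g]$ and checks openness of $\varphi$ by computing images of basic open sets $\bigcap_{h\in F}h^{-1}\pi^{-1}(U_h)$, whereas you apply Lemma~\ref{continuity_lemma} componentwise to both $\varphi$ and $\psi$ (for $\psi$ this implicitly uses the identification of $(\Omega^S)^G$ with $\Omega^{S\times G}$, or an iterated application of the lemma, which is harmless). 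Your version of this step is equally valid and somewhat more economical, since openness of $\varphi$ is precisely continuity of $\psi$, and the componentwise criterion dispatches both maps without any open-set bookkeeping.
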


\begin{proof}
Let's see that $\varphi^{-1}: \Omega^G \to (\Omega^S)_\receivingSgroup$ given by $
\varphi^{-1}(z) = ((\varphi^{-1}(z))_h)_{h \in G}$ for $z \in \Omega^G$ is the inverse of $\varphi$, where $(\varphi^{-1}(z))_h\colon S \rightarrow \Omega$ is defined on each coordinate $h\in G$ as $t \mapsto z(\eta(t)h)$. First, observe that $\varphi^{-1}$ is well-defined, as for all $z\in \Omega^G$, $s,t\in S$ and $h\in G$,
$$[s\cdot (\varphi^{-1}(z))_h](t)=(\varphi^{-1}(z))_h(ts)=z(\eta(ts)h)=z(\eta(t)\eta(s)h)=(\varphi^{-1}(z))_{\eta(s)h}(t),$$
yielding $s\cdot (\varphi^{-1}(z))_h=(\varphi^{-1}(z))_{\eta(s)h}$, so $\varphi^{-1}(z)\in (\Omega^S)_{\receivingSgroup}$. Moreover, $\varphi^{-1}$ is the inverse, because for every $g \in G$ and $z \in \Omega^G$, and since $\eta(1_S) = 1_G$,
$$
(\varphi \circ \varphi^{-1})(z)(g) = \varphi(\varphi^{-1}(z))(g) = (\varphi^{-1}(z))_g(1_S) = z(g),
$$
so $\varphi \circ \varphi^{-1} = \text{id}_{\Omega^G}$, and for every $t \in S$ and $\overline{x}=(\overline{x}_h)_{h\in G} \in (\Omega^S)_\receivingSgroup$,
$$
(\varphi^{-1}(\varphi(\overline{x})))_h(t)=\varphi(\overline{x})(\eta(t)h)=\overline{x}_{\eta(t)h}(1_S)=(t\cdot \overline{x}_h)(1_S)=\overline{x}_h(t) \quad \text{ for each } h \in G,
$$
so $\varphi^{-1} \circ \varphi = \text{id}_{(\Omega^S)_\receivingSgroup}$. Next, observe that $\varphi$ is $G$-equivariant, because
    $$\varphi(g\cdot \overline{x})(h)=(g\cdot \overline{x})_h(1_S)=\overline{x}_{hg}(1_S)=\varphi(\overline{x})(hg)=g\cdot \varphi(\overline{x})(h).$$
Also, note that $\rho\circ \varphi=\pi$ and $\pi\circ \varphi^{-1}=\rho$. It remains to see that $\varphi$ is a homeomorphism, in order to conclude that it is an isomorphism of $\receivingSgroup$-extensions. Given an open set $U \subseteq \Omega$ and $g \in G$, consider the open set $[U;g] = \{z \in \Omega^G: z(g) \in U\}$. Then,
    $$\varphi^{-1}([U;g])=\{\overline{x}\in (\Omega^S)_{\receivingSgroup}:\varphi(\overline{x})(g) \in U\}=g^{-1}\pi^{-1}([U;1_S]),$$
since $\varphi(\overline{x})(g)=\overline{x}_g(1_S)=\pi(g\cdot\overline{x})(1_S)$, so the preimage of $[U;g]$ via $\varphi$ is open by continuity of the action $G\acts (\Omega^S)_{\receivingSgroup}$ and of $\pi$. Since sets of the form $[U;g]$ are a subbase for the topology of $\Omega^G$, we have that $\varphi$ is continuous. On the other hand, the topology of $(\Omega^S)_{\receivingSgroup}$ is generated by the sets of the form $\bigcap_{h\in F}h^{-1}\pi^{-1}(U_h)$, where $F\subseteq G$ is finite and each $U_h\subseteq \Omega^S$ is open. Thus, $\varphi\left(\bigcap_{h\in F}h^{-1}\pi^{-1}(U_h)\right)=\bigcap_{h\in F}h^{-1}\varphi(\pi^{-1}(U_h))=\bigcap_{h\in F}h^{-1}\rho^{-1}(U_h)$. Since $\rho$ is continuous, we have that $\varphi$ is open, hence a homeomorphism.
\end{proof}

\begin{proposition}\label{prop:symbolic_representation_of_natural_extension}
    Let $X\subseteq \Omega^S$ be an $S$-subshift, and define $Z\subseteq \Omega^G$ by
    $$Z=\big{\{}z\in \Omega^G:(g\cdot z)\circ \eta\in X \text{ for all }g\in G\big{\}}.$$
    Then, $(X_{\receivingSgroup},\sigma,\pi)$ is isomorphic as a $\receivingSgroup$-extension to $(Z,\sigma,\rho|_{Z})$.
\end{proposition}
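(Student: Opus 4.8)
The plan is to reuse the conjugacy $\varphi\colon (\Omega^S)_{\receivingSgroup}\to \Omega^G$ established in the previous proposition and simply restrict it to the subspaces at hand. The first observation is that, since $X\subseteq \Omega^S$, we have the set-theoretic identity $X_{\receivingSgroup}=(\Omega^S)_{\receivingSgroup}\cap X^G$: the compatibility condition $s\cdot x_h = x_{\eta(s)h}$ defining $X_{\receivingSgroup}$ is literally the one defining $(\Omega^S)_{\receivingSgroup}$, so an element $\overline{x}=(\overline{x}_h)_{h\in G}$ of $(\Omega^S)_{\receivingSgroup}$ lies in $X_{\receivingSgroup}$ precisely when every coordinate $\overline{x}_h$ belongs to $X$. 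The whole problem therefore reduces to showing that $\varphi$ carries $X_{\receivingSgroup}$ bijectively onto $Z$; once that is known, the remaining data of an isomorphism of $\receivingSgroup$-extensions ($G$-equivariance, being a homeomorphism, and the relation $\rho\circ\varphi=\pi$) are inherited verbatim from the previous proposition.

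The key step is a single index-tracking identity. Writing $z=\varphi(\overline{x})$, the previous proof already yields $\overline{x}_h(t)=z(\eta(t)h)$ for all $h\in G$ and $t\in S$, while the shift action on $\Omega^G$ satisfies $(g\cdot z)(h)=z(hg)$. Combining these, for every $g\in G$ I would compute
$$((g\cdot z)\circ\eta)(t)=(g\cdot z)(\eta(t))=z(\eta(t)g)=\overline{x}_g(t),$$
so that $(g\cdot z)\circ\eta=\overline{x}_g$ as elements of $\Omega^S$. This identifies each coordinate $\overline{x}_g$ of the extension with exactly the ``slice'' of $z$ that appears in the definition of $Z$. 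The equivalences then chain together immediately: $\overline{x}\in X_{\receivingSgroup}$ iff $\overline{x}_g\in X$ for all $g\in G$ iff $(g\cdot z)\circ\eta\in X$ for all $g\in G$ iff $z=\varphi(\overline{x})\in Z$. Hence $\varphi(X_{\receivingSgroup})=Z$.

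To finish, I would record the routine consequences. The set $Z$ is $G$-invariant, being the image of the $G$-invariant set $X_{\receivingSgroup}$ under the $G$-equivariant map $\varphi$, so the shift restricts to an action $G\acts Z$; taking $g=1_G$ in the defining condition of $Z$ gives $\rho(z)=z\circ\eta\in X$, so $\rho|_Z\colon Z\to X$ is well-defined. Since the restriction of a $G$-equivariant homeomorphism to a $G$-invariant subset and its image is again a $G$-equivariant homeomorphism, and since $\rho|_Z\circ(\varphi|_{X_{\receivingSgroup}})=\pi|_{X_{\receivingSgroup}}$ follows from the global relation $\rho\circ\varphi=\pi$, the map $\varphi|_{X_{\receivingSgroup}}$ is precisely an isomorphism of $\receivingSgroup$-extensions between $(X_{\receivingSgroup},\sigma,\pi)$ and $(Z,\sigma,\rho|_Z)$. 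The only real obstacle is the bookkeeping in the slice identity $(g\cdot z)\circ\eta=\overline{x}_g$, namely keeping the left/right placement of $\eta(t)$ and $g$ consistent with the conventions $(g\cdot z)(h)=z(hg)$ and $\overline{x}_h(t)=z(\eta(t)h)$; no genuinely new argument beyond the previous identification is needed.
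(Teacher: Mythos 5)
Your proposal is correct and follows essentially the same route as the paper: restrict the isomorphism $\varphi$ from the preceding proposition and verify that $\varphi(X_{\receivingSgroup})=Z$, with the whole matter resting on the index computation identifying $(g\cdot\varphi(\overline{x}))\circ\eta$ with $\overline{x}_g$. If anything, your equivalence chain handles both inclusions explicitly, whereas the paper only writes out the inclusion $Z\subseteq\varphi(X_{\receivingSgroup})$ (via the inverse formula) and leaves the converse implicit, so no gap is introduced.
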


\begin{proof}
    Since $X_{\receivingSgroup}$ is a $G$-invariant closed subset of $(\Omega^S)_{\receivingSgroup}$ and $\varphi\colon (\Omega^S)_{\receivingSgroup}\rightarrow \Omega^G$ is an isomorphism of $\receivingSgroup$-extensions of $\Omega^S$, it suffices to show that $\varphi(X_{\receivingSgroup})= Z$ to conclude that $\varphi|_{X_{\receivingSgroup}}\colon X_{\receivingSgroup}\rightarrow Z$ is an equivariant homeomorphism with $\rho \circ \varphi=\pi$ in $X_{\receivingSgroup}$. Let $z\in Z$ and define $\overline{x} \in (\Omega^S)_{\receivingSgroup}$ by $\overline{x}_g=(g\cdot z)\circ \eta\in X$ for each $g \in G$. It is clear that for all $s,t\in S$ and $g\in G$ we have
    $$(s\cdot \overline{x}_g)(t)=(\overline{x}_g)(ts)=(g\cdot z)(\eta(ts))=(g\cdot z)(\eta(t)\eta(s))=(\eta(s)g\cdot z)(\eta(t))=(\overline{x}_{\eta(s)g})(t),$$
    so $\overline{x}\in X_{\receivingSgroup}$. Thus, since $\varphi(\overline{x})=z$, we have proven the first statement. From the fact that $\rho\circ \varphi=\pi$ and $\varphi$ is bijective, we obtain the second statement.
\end{proof}

As a consequence of Proposition \ref{prop:symbolic_representation_of_natural_extension} and for convenience, we will treat $(X_{\receivingSgroup
},\sigma,\pi)$ and $(Z,\sigma,\rho\rvert_{Z})$ as interchangeable objects.

\begin{remark}
If $S \leq G$, a natural choice for $\eta: S \to G$ is the inclusion map. In this special case, notice that $(g \cdot x) \circ \eta = \left.(g \cdot x)\right\vert_S$. This idea has already appeared in the literature for the case where $S$ is a group under the name \emph{free extension}. See \cite{barbieri2023soficity,bitar2024realizability,raymond2024}.
\end{remark}

If $\Omega$ is taken to be a countable and discrete alphabet $\mathcal{A}$, readers acquainted with symbolic dynamics will know that every $S$-subshift $X \subseteq \mathcal{A}^S$ can be described by a set of forbidden patterns. In this context, Proposition~\ref{prop:symbolic_representation_of_natural_extension} can be understood as the fact that the same set of forbidden patterns, interpretted as patterns over $G$, can be ``recycled'' to describe the natural extension as a $G$-subshift combinatorially. In particular, the natural extension of an $S$-subshift \emph{of finite type} will be of finite type as well.

\section{A characterization of the free \texorpdfstring{$S$}{S}-group through natural extensions}
\label{sec3}

The general case of embedding a semigroup into a group is delicate. Indeed, in \cite{malcev}, Mal'cev exhibited an example of a bicancellative semigroup which cannot be embedded into a group, and so bicancellativity is a necessary but not a sufficient condition for embeddability. Furthermore, he showed that a semigroup $S$ allows an embedding if, and only if, it satisfies a countably infinite set of algebraic conditions, while showing non-embeddable semigroups satisfying any arbitrary finite selection of these conditions. Thus, the problem of determining if for a specific semigroup such an embedding exists, is, in general, hard. A key tool to approach this problem is the \textbf{free $S$-group}, a distinguished candidate for this embedding among the many possible groups where $S$ could be embedded. See \cite[Chapter 12]{CliffordII}.

\subsection{The free \texorpdfstring{$S$}{S}-group}
\label{section4-1}

Let $S$ be a semigroup. A pair $(G,\eta)$ will be called an $S$-\textbf{group} if $G$ is a group and $\eta\colon S\rightarrow G$ is a semigroup morphism with $\langle \eta(S)\rangle =G$. A \textbf{morphism} of $S$-groups from $(G,\eta)$ to $(G',\eta')$ will be a group morphism $\grmorph\colon G\rightarrow G'$ such that $\grmorph\circ \eta=\eta'$.

\begin{definition}
A \textbf{free group on the semigroup} $S$, or a \textbf{free} $S$-\textbf{group}, is an initial object in the category of $S$-groups, i.e., an $S$-group $\freeSgroup = (\Gamma,\gamma)$ such that for every $S$-group $(G,\eta)$ there is a unique morphism $\grmorph \colon \Gamma\rightarrow G$ with $\grmorph\circ \gamma=\eta$.
$$
\begin{tikzcd}
S \arrow[r, "\gamma"] \arrow[rd, "\eta"'] &  \Gamma \arrow[d, "\grmorph", dashed] \\
&  G                             
\end{tikzcd}
$$
\end{definition}

The free group on a semigroup $S$ always exists (see \cite[Construction 12.3]{CliffordII}), and as an initial object, it is unique up to isomorphism of $S$-groups. For the very same reason, we will speak of \emph{the} free $S$-group whenever we talk about properties that are stable under isomorphisms of $S$-groups, and of a \emph{realization} of the free $S$-group when we want to refer to a specific pair $(\Gamma,\gamma)$. 

While in some cases the universal property of the free $S$-group will be really useful, the following concrete way of viewing this object will play a major role as well. Let $S=\langle B\mid R\rangle^+$ be a presentation for $S$, with $R \subseteq \F(B)^+\!\times \F(B)^+$,  and let $\Gamma$ be the group whose presentation is $\langle B\mid R\rangle$. In other words, if $\mathcal{R}^+$ is the congruence generated by $R$ in $\F(B)^+$ and $\mathcal{R}$ is the congruence generated by $R$ in $\F(B)$, then
$$S=\F(B)^+/\mathcal{R}^+\quad \text{and}\quad \Gamma=\F(B)/\mathcal{R}.$$
Alternatively, $\Gamma \simeq \F(B)/\langle R'\rangle_{\triangleleft}$ where $\langle R'\rangle_{\triangleleft}$ denotes the normal subgroup generated by $R'=\{uv^{-1}:(u,v)\in R\}$ in $\F(B)$, i.e.,
$$\langle R'\rangle_{\triangleleft}=\left\{\prod_{j=1}^{n}w_jr_j^{\varepsilon_j}w_j^{-1}:w_j\in \F(B),r_j\in R', \varepsilon_j\in\{1,-1\}\text{ for }1\leq j\leq n,n\in\N\right\}.$$

Let $\gamma\colon S\rightarrow \Gamma$ be defined by $\gamma([b]_{\mathcal{R}^+})=[b]_{\mathcal{R}}$ for $b \in B$, and extend $\gamma$ to all of $S$ homomorphically, so that for any word $w\in\F(B)^+$ we map $[w]_{\mathcal{R}^+}$ to the corresponding equivalence class $[w]_{\mathcal{R}}$. The map $\gamma\colon S\rightarrow \Gamma$ is well-defined, and the pair $(\Gamma,\gamma)$ is a realization of the free $S$-group, called the \textbf{canonical realization}. The relevant thing about the free $S$-group is that it characterizes embeddability:

\begin{theorem}[{\cite[Theorem 12.4]{CliffordII}}]
    Let $S$ be a semigroup, and $(\Gamma,\gamma)$ be the free $S$-group. Then, the semigroup $S$ can be embedded in a group if and only if $\gamma\colon S\rightarrow \Gamma$ is an embedding.
\end{theorem}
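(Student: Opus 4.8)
The plan is to leverage the universal property of the free $S$-group, which renders both directions essentially formal. The backward implication is immediate: if $\gamma\colon S\to\Gamma$ is an embedding, then since $\Gamma$ is a group, $\gamma$ exhibits $S$ as a subsemigroup of a group, so $S$ is embeddable. All the content lies in the forward implication, and the strategy there is to factor a given embedding through $\gamma$ and read off injectivity of $\gamma$ from injectivity of the given embedding.

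For the forward implication, suppose $S$ embeds into a group, say via an injective semigroup morphism $\eta\colon S\hookrightarrow H$ with $H$ a group. First I would replace $H$ by the subgroup $G=\langle\eta(S)\rangle\leq H$ generated by the image; restricting the codomain does not affect injectivity, so $\eta\colon S\to G$ remains an embedding, and now $\langle\eta(S)\rangle=G$, making $(G,\eta)$ a genuine $S$-group. Next I would invoke the defining universal property of $(\Gamma,\gamma)$ as an initial object in the category of $S$-groups: there is a unique morphism of $S$-groups $\theta\colon\Gamma\to G$ with $\theta\circ\gamma=\eta$. Finally, injectivity of $\gamma$ falls out of injectivity of $\eta$: if $\gamma(s)=\gamma(t)$ for some $s,t\in S$, then $\eta(s)=\theta(\gamma(s))=\theta(\gamma(t))=\eta(t)$, whence $s=t$. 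Thus $\gamma$ is an embedding.

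The argument is short, and the only point requiring care---the main obstacle, such as it is---is the passage from an arbitrary receiving group $H$ to an $S$-group $(G,\eta)$ to which the universal property applies. This is precisely where the condition $\langle\eta(S)\rangle=G$ in the definition of an $S$-group is used: without restricting to the subgroup generated by the image, the pair $(H,\eta)$ need not be an object of the category in which $(\Gamma,\gamma)$ is initial, so the universal property could not be invoked directly. Once this reduction is in place, the factorization $\eta=\theta\circ\gamma$ forces $\gamma$ to be at least as injective as $\eta$, which completes the proof.
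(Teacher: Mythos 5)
Your proof is correct. The paper does not actually prove this statement---it cites Clifford and Preston's Theorem 12.4---and your argument is precisely the standard one behind that reference: the backward direction is trivial, and the forward direction corestricts the given embedding $\eta\colon S\to H$ to $G=\langle\eta(S)\rangle$ so that $(G,\eta)$ is an $S$-group, invokes initiality of $(\Gamma,\gamma)$ to obtain $\theta\colon\Gamma\to G$ with $\theta\circ\gamma=\eta$, and reads off injectivity of $\gamma$ from injectivity of $\eta$. Your observation that the corestriction step is the one point requiring care is exactly right, since the universal property as stated in this paper applies only to pairs $(G,\eta)$ with $\langle\eta(S)\rangle=G$.
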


Note that, if $(\Gamma,\gamma)$ is the free $S$-group and $(G,\eta)$ is any $S$-group, the morphism $\grmorph\colon \Gamma\to G$ granted by the universal property of $(\Gamma,\gamma)$ must be surjective, as its image contains a generating set for $G$. This immediately implies that, if $(G,\eta)$ is not isomorphic to $(\Gamma,\gamma)$, then $G$ is isomorphic to a proper quotient of $\Gamma$. In particular, if there is a semigroup morphism $\eta\colon S\to \Gamma$ such that $(\Gamma,\eta)$ is an $S$-group not isomorphic to $(\Gamma,\gamma)$, then $\Gamma$ must be non-Hopfian. The following examples exhibit this fact.

\begin{example}[A receiving $S$-group that is not the free $S$-group] \label{ex:many_receiving_groups}
Consider the Baumslag--Solitar group $\text{BS}(1,2)=\langle \alpha,\beta|\alpha\beta=\beta^2\alpha\rangle$, which can be dynamically realized as a subgroup of $\text{Homeo}(\R)$ by viewing $\alpha$ and $\beta$ as the maps $x\mapsto 2x$ and $x\mapsto x+1$. By defining $a=\alpha$ and $b=\beta\alpha$, it is not hard to prove via Tietze transformations that the following is an alternative presentation of this group:
$$\text{BS}(1,2)=\langle a,b\mid b^{-1}ab=a^{-1}ba\rangle.$$
From the dynamical realization of $\mathrm{BS}(1,2)$, we can see that the subsemigroup $\langle a,b\rangle^+\le\mathrm{BS}(1,2)$ acts on $\N$ by $a\cdot n = 2n,b\cdot n=2n+1$, and hence a ping-pong argument using the sets $2\N$ and $2\N+1$ shows that $\langle a,b\rangle^+\simeq\F_2^+$. Thus, we can see that $\mathrm{BS}(1,2)$ is a receiving $\F_2^+$-group which is not isomorphic to the free $\F_2^+$-group (not even as groups), as the latter corresponds to the free group $\F_2$ with the obvious embedding.
\end{example}

\begin{example}[Multiple embeddings in the non-Hopfian case]  
An example of a semigroup $S$, a non-Hopfian group $\Gamma$, and a pair of embeddings $\gamma,\eta\colon S\to \Gamma$ such that $(\Gamma,\gamma)$ is the free $S$-group and $(\Gamma,\eta)$ is not isomorphic to $(\Gamma,\gamma)$ is provided by 
$$S=\text{BS}(2,3)^+ =\langle a,b\mid ab^2=b^3a\rangle^+\quad\text{and}\quad \Gamma=\text{BS}(2,3)=\langle a,b\mid ab^2=b^3a\rangle,$$
where $\gamma\colon S\to \Gamma$ is the natural inclusion, and $\eta$ will be determined as follows.
Consider the epimorphism $\grmorph\colon \text{BS}(2,3)\to \text{BS}(2,3)$ given by $\grmorph(a)=a$ and $\grmorph(b)=b^2$. It is known that $\text{ker}(\grmorph)=\langle [a^{-1}ba,b]\rangle_{\triangleleft}\simeq \F_{\infty}$, so in particular $\text{BS}(2,3)$ is non-Hopfian; see \cite{kaiser1}. Nevertheless, the morphism $\eta\colon \text{BS}(2,3)^+\to \text{BS}(2,3)$ given by $\eta =\grmorph\circ \gamma$ is injective---which can be verified by viewing $\text{BS}(2,3)$ as a subgroup of $\text{Homeo}(\R)$---so $(\text{BS}(2,3),\eta)$ is a receiving group for $\text{BS}(2,3)^+$. Note that $\text{im}(\grmorph)$ generates $\text{BS}(2,3)$, since $b=ab^2a^{-1}b^{-2}$. Thus, $(\text{BS}(2,3),\gamma)$ and $(\text{BS}(2,3),\eta)$ are both receiving $\text{BS}(2,3)^+$-groups. Nonetheless, they are not isomorphic, as $\theta$ is the only possible morphism of $S$-groups $(\text{BS}(2,3),\gamma)\to (\text{BS}(2,3),\theta\circ\gamma)$ and it is not an isomorphism.
\end{example}

\subsection{Feasibility and infeasibility results}

We now proceed to characterize the free $S$-group in terms of natural extensions of $S$-actions. 

\begin{lemma}\label{lemma:isomorphic_S_groups}
    Let $S\acts X$ be a continuous action, and let $\grmorph\colon\receivingSgroup\to\receivingSgroup'$ be an isomorphism between the receiving $S$-groups $\receivingSgroup = (G,\eta)$ and $\receivingSgroup' =(G',\eta')$. Then $S\acts X$ is $\receivingSgroup$-extensible if and only if it is $\receivingSgroup'$-extensible.
\end{lemma}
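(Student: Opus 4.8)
The plan is to use the isomorphism $\grmorph$ to reindex coordinates, producing a bijection between the two extension sets $X_{\receivingSgroup'}$ and $X_{\receivingSgroup}$ that intertwines the two projection maps $\pi'$ and $\pi$. Since both notions are defined purely through these projections---extensibility as surjectivity of $\pi$, partial extensibility as non-emptiness of $X_{\receivingSgroup}$---the equivalence will follow at once from the existence of such a bijection.

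First I would define $\Phi\colon X_{\receivingSgroup'}\to X^G$ by precomposing the index with $\grmorph$, that is, $\Phi\big((x_{h'})_{h'\in G'}\big)=(x_{\grmorph(h)})_{h\in G}$. The key step is to verify that $\Phi$ actually lands in $X_{\receivingSgroup}$. Writing $y_h:=x_{\grmorph(h)}$ and using both that $(x_{h'})_{h'}$ satisfies the defining relation of $X_{\receivingSgroup'}$ and that $\grmorph\circ\eta=\eta'$, one computes
$$
s\cdot y_h=s\cdot x_{\grmorph(h)}=x_{\eta'(s)\grmorph(h)}=x_{\grmorph(\eta(s))\grmorph(h)}=x_{\grmorph(\eta(s)h)}=y_{\eta(s)h},
$$
so that $\Phi\big((x_{h'})_{h'}\big)\in X_{\receivingSgroup}$, as required.

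Next I would observe that $\Phi$ is a bijection whose inverse is reindexing by $\grmorph^{-1}$; this is legitimate because $\grmorph^{-1}$ is itself an isomorphism of $S$-groups, so $\grmorph^{-1}\circ\eta'=\eta$, and the same computation shows the inverse reindexing maps $X_{\receivingSgroup}$ into $X_{\receivingSgroup'}$. (Being the restriction of a coordinate permutation of a product space, $\Phi$ is in fact a homeomorphism, though only its bijectivity is needed here.) Finally, using $\grmorph(1_G)=1_{G'}$, one checks compatibility with the projections:
$$
\pi\big(\Phi((x_{h'})_{h'})\big)=y_{1_G}=x_{\grmorph(1_G)}=x_{1_{G'}}=\pi'\big((x_{h'})_{h'}\big),
$$
hence $\pi\circ\Phi=\pi'$. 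From this the conclusions are immediate: $\pi'$ is surjective if and only if $\pi\circ\Phi$ is, if and only if $\pi$ is (as $\Phi$ is onto), giving the equivalence for extensibility; and $X_{\receivingSgroup'}=\varnothing$ if and only if $X_{\receivingSgroup}=\varnothing$ since $\Phi$ is a bijection, giving the equivalence for partial extensibility.

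This argument is a routine verification rather than a genuine obstacle; the only point demanding care is keeping the direction of the reindexing straight and applying $\grmorph\circ\eta=\eta'$ (and its inverse counterpart) on the correct side, which is exactly what makes $\Phi$ well-defined in both directions.
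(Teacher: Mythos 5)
Your proposal is correct and follows essentially the same route as the paper: the same reindexing map $(x_{h'})_{h'\in G'}\mapsto(x_{\grmorph(h)})_{h\in G}$, the same equivariance computation using $\grmorph\circ\eta=\eta'$ to show it lands in $X_{\receivingSgroup}$, the same inverse via $\grmorph^{-1}$, and the same identity $\pi\circ\Phi=\pi'$ to transfer surjectivity and non-emptiness. (If anything, your final deduction is spelled out slightly more explicitly than the paper's, and you state the identity-element relation $\grmorph(1_G)=1_{G'}$ with the correct orientation.)
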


\begin{proof}
Define $\varphi\colon X_{\receivingSgroup'}\to X_{\receivingSgroup}$ by $(x_{h'})_{h'\in G'}\mapsto (x_{\grmorph(h)})_{h\in G},$
which is well-defined, since for all $s\in S$ and $(x_{h'})_{h'\in G'}\in X_{\receivingSgroup'}$ we have $s\cdot x_{\grmorph(h)}=x_{\eta'(s)\grmorph(h)}=x_{\grmorph(\eta(s))\grmorph(h)}=x_{\grmorph(\eta(s)h)}$, so $(x_{\grmorph(h)})_{h\in G}\in X_\receivingSgroup$. Similarly, the map $X_{\receivingSgroup}\to X_{\receivingSgroup'}$ given by $(x_{h})_{h\in G}\mapsto (x_{\grmorph^{-1}(h')})_{h'\in G'}$ is well-defined, so it defines an inverse for $\varphi$, which is thus bijective.
    Since $\grmorph(1_{G'})=1_G$, we have $\pi'=\pi\circ\varphi$ and $\pi=\pi'\circ \varphi^{-1}$. From here, it follows immediately that $S\acts X$ is  $\receivingSgroup$-extensible if and only if it is $\receivingSgroup'$-extensible.
\end{proof}

\begin{proposition}\label{prop:subshifts_are_extensible}
    Let $S$ be an embeddable monoid, $\Omega$ a topological space, and $X \subseteq \Omega^S$ a surjective $S$-invariant subset. Then, if $S=\langle B\mid R\rangle^+$ is a presentation of $S$, the shift action $S \acts X$ is $\freeSgroup$-extensible for the canonical realization $\freeSgroup$ of the free $S$-group. 
\end{proposition}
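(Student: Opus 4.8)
The plan is to reduce the statement to a purely combinatorial construction of a single coherent family, and then to isolate the one genuinely delicate point, namely compatibility with the relations $R$.

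First I would invoke Proposition~\ref{ext_functor1}: the action $S\acts X$ is $\freeSgroup$-extensible if and only if the projection $\pi\colon X_{\freeSgroup}\to X$ is surjective, where, writing $\Gamma=\langle B\mid R\rangle$ and using the fraction description,
$$
X_{\freeSgroup}=\left\{(x_h)_{h\in\Gamma}\in X^\Gamma : s\cdot x_h=x_{\iota(s)h}\text{ for all }s\in S,\ h\in\Gamma\right\}.
$$
Thus it suffices to show that for every $x\in X$ there is a family $(x_h)_{h\in\Gamma}$ with each $x_h\in X$, satisfying $x_{1_\Gamma}=x$ and the cocycle relation $s\cdot x_h=x_{\iota(s)h}$. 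This is a set-theoretic assertion: neither membership in $X_{\freeSgroup}$ nor surjectivity of $\pi$ involves the topology of $\Omega$, so no further topological input is needed. Alternatively one may argue through the symbolic model $(Z,\sigma,\rho|_Z)$ of Proposition~\ref{prop:symbolic_representation_of_natural_extension} and produce $z\in Z$ with $z\circ\iota=x$; the two formulations are interchangeable.

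Second, I would build $(x_h)$ by propagating the base value $x$ through the Cayley graph $\mathrm{Cay}(\Gamma,B)$. The relation $s\cdot x_h=x_{\iota(s)h}$ shows that once $x_h$ is known, the value $x_{\iota(b)h}=b\cdot x_h$ at every forward generator-neighbour is \emph{forced} by the action; conversely, to move to an inverse neighbour $\iota(b)^{-1}h$ one must \emph{choose} some $x_{\iota(b)^{-1}h}\in X$ with $b\cdot x_{\iota(b)^{-1}h}=x_h$, and such a preimage exists precisely because $S\acts X$ is surjective, so each $\sigma_b|_X$ is onto. Concretely, I would fix a spanning tree of $\mathrm{Cay}(\Gamma,B)$ rooted at $1_\Gamma$, set $x_{1_\Gamma}=x$, and define $x_h$ by recursion along the tree, using the action on positive edges and a chosen preimage on inverse edges. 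It then suffices to verify the generator relations $b\cdot x_h=x_{\iota(b)h}$ for all $b\in B$ and $h\in\Gamma$, since a general $s\in S$ is a product of generators.

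The main obstacle is exactly this verification: showing the assignment is \emph{well defined}, i.e. consistent around the cycles of $\mathrm{Cay}(\Gamma,B)$ (equivalently, independent of the word representing $h$). Here the defining feature of the \emph{free} $S$-group enters. Every cycle is generated by the defining relators of $\langle B\mid R\rangle$, and, because $R\subseteq\F(B)^+\!\times\F(B)^+$, each such relator comes from a \emph{positive} relation: a pair $u,v\in\F(B)^+$ with $u\mathrel{\mathcal{R}^+}v$. The associated cycle at any basepoint $g$ is traversed by two \emph{forward} paths, giving $u\cdot x_g$ and $v\cdot x_g$, and these agree because $u$ and $v$ represent the same element of $S$ and hence act identically on $X$. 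The remaining cycles are conjugates and products of these relators, and along them each inverse move cancels against its matching forward move via the defining preimage property $b\cdot x_{\iota(b)^{-1}h}=x_h$; so consistency always reduces to the positive relators, which hold automatically. This is precisely the property that fails for a non-free receiving group, where a relation not implied by those of $S$ forces an inconsistently coloured cycle, as in Example~\ref{Non-extensible}. Once consistency is established, $(x_h)\in X_{\freeSgroup}$ projects to $x$, so $\pi$ is surjective and $S\acts X$ is $\freeSgroup$-extensible.
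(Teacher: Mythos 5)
Your first two paragraphs are fine and match the paper's own reduction: by Proposition~\ref{ext_functor1} it suffices to show $\pi\colon X_{\freeSgroup}\to X$ is surjective, and this is indeed a purely set-theoretic statement about producing, for each $x\in X$, a coherent family $(x_h)_{h\in\Gamma}$ with $x_{1_\Gamma}=x$. The gap is in your third paragraph, at the claim that with a spanning tree of $\mathrm{Cay}(\Gamma,B)$ and \emph{arbitrary} preimage choices on inverse edges, ``consistency always reduces to the positive relators, which hold automatically.'' This is false. Your cancellation argument needs the cycle, \emph{as an edge path}, to literally backtrack along the conjugating word $w$, so that each inverse move and its matching forward move traverse the \emph{same} edge; but the fundamental cycle attached to a chord of the spanning tree has no such backtracking structure, even though as a group element it is a product of conjugated relators. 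Concretely, take $S=\N^2=\langle a,b\mid ab=ba\rangle^+$, $\Gamma=\Z^2$, and $X=\{0,1\}^{\N^2}$ the full shift (a surjective $S$-subshift, for which the proposition is certainly true, since $\N^2$ is reversible). Take the spanning tree of $\mathrm{Cay}(\Z^2,\{a,b\})$ consisting of the horizontal axis together with all vertical lines, rooted at $(0,0)$, and set $x_{(0,0)}=x$ equal to the zero configuration. Following your recursion: $x_{(1,0)}=a\cdot x$ is again zero; on the inverse $b$-edge below the origin choose the preimage $x_{(0,-1)}=z$ where $z(n,m)=1$ exactly when $m=0$ (legal, since $b\cdot z=x$); on the inverse $b$-edge below $(1,0)$ choose $x_{(1,-1)}$ to be the zero configuration. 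All tree constraints hold, but the chord constraint $a\cdot x_{(0,-1)}=x_{(1,-1)}$ fails, because $a\cdot z=z\neq 0$. The fundamental cycle through this chord is precisely the conjugate $a^{-1}\bigl((ba)(ab)^{-1}\bigr)a$ of a defining relator by the non-positive element $a^{-1}$, and nothing cancels along it: its inverse $b$-move and its forward $b$-move occur at two \emph{different} edges of the graph, so the identity $b\cdot x_{\iota(b)^{-1}h}=x_h$ relates different pairs of points and gives no constraint matching.

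The real content of the proposition is exactly how to organize the preimage choices so that all chord constraints hold simultaneously: surjectivity guarantees that preimages exist, but gives no compatibility whatsoever between independently chosen ones, and whether compatible choices exist is essentially the statement being proved. This is where the paper's proof invests all of its effort: it lifts $X$ to a set $Y\subseteq\Omega^{\F(B)^+}$ over the free semigroup, uses surjectivity to fix one \emph{global} section $b^{-1}\colon Y\to Y$ per generator (the same map applied everywhere, rather than an unconstrained per-edge choice), defines the candidate configuration on the free group $\F(B)$ via reduced words --- where the Cayley graph is a tree, so no consistency question can arise --- and only then confronts the descent from $\F(B)$ to $\Gamma=\F(B)/\langle R'\rangle_{\triangleleft}$, which is a genuine invariance argument about the relator normal subgroup, not an automatic cancellation. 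Your proposal compresses this entire step, which is where both the difficulty and the role of freeness of the $S$-group actually live, into an assertion that my example above refutes.
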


\begin{proof}
    Write $\freeSgroup= (\Gamma,\gamma)$, where $\Gamma=\langle B\mid R\rangle$ and $\gamma\colon S \to \langle B\mid R\rangle$ is the canonical inclusion, as in \S\ref{section4-1}. Let $\F(B)$ and $\F(B)^+$ denote the free group and free semigroup on $B$, respectively. Define $Y:=\{x\circ [\cdot]_{\mathcal{R}^+}:x\in X\}\subseteq \Omega^{\F(B)^+}$. The set $Y$ is $\F(B)^+$-invariant: if $u,v \in \F(B)^+$ and $y\in Y$, then $y=x\circ [\cdot]_{\mathcal{R}^+}$ for some (unique) $x\in X$, and so $(u\cdot y)(v)=y(vu)=x([v]_{\mathcal{R}^+}[u]_{\mathcal{R}^+})=([u]_{\mathcal{R}^+}\cdot x)([v]_{\mathcal{R}^+})$. Thus, $u \cdot y=([u]_{\mathcal{R}^+}\cdot x)\circ [\cdot]_{\mathcal{R}^+}\in Y$. In addition, the action $\F(B)^+\acts Y$ is surjective: for $u\in \F(B)^+$ and $y=x\circ [\cdot]_{\mathcal{R}^+}\in Y$ with $x \in X$, by surjectivity of $S\acts X$ there is an $x'\in X$ with $[u]_{\mathcal{R}^+}\cdot x'=x$. Taking $y' = x'\circ [\cdot]_{\mathcal{R}^+}$, we obtain
    $$(u\cdot y')(v) = (x'\circ [\cdot]_{\mathcal{R}^+})(vu)=x'([v]_{\mathcal{R}^+}[u]_{\mathcal{R}^+})=([u]_{\mathcal{R}^+}\cdot x')([v]_{\mathcal{R}^+})=x([v]_{\mathcal{R}^+}) = y(v)$$
    for every $v\in \F(B)^+$, so $u\cdot y'=y$. 
    
    Now we prove that $Y$ is $\mathbf{F}$-extensible, where $\mathbf{F}=(\F(B),\iota)$ and $\iota\colon \F(B)^+\to \F(B)$ is the canonical inclusion. In the following, we will identify $\F(B)^+$ with $\iota(\F(B)^+)$. Since $\F(B)^+\acts Y$ is surjective, to every $b\in B$ we can associate a function $f_b\colon Y\to Y$ given by $f_b(y)=b\cdot y$ that admits a right inverse $f_{b^{-1}}\colon Y \rightarrow Y$, i.e., $f_{b}\circ f_{b^{-1}}=\text{id}_{Y}$. With this, we regard each $w\in\F(B)$ as a function $f_w\colon Y\to Y$ by writing $w$ in its reduced form $\hat{w}=b_1^{\varepsilon_1}\cdots b_n^{\varepsilon_n}$, where $b_i\in B$ and $\epsilon_i\in\{1,-1\}$, and setting $f_w = f_{b_1^{\varepsilon_1}}\circ\cdots \circ f_{b_n^{\varepsilon_n}}$. Observe that, if $v\in \F(B)^+$ and $w\in \F(B)$, then $f_{vw}=f_v\circ f_w$. Indeed, writing $v=b_m'\cdots b_1' \in \F(B)^+$, and $\hat{w}=b_1^{\varepsilon_1}\cdots b_n^{\varepsilon_m}$, the reduced word $\widehat{vw}$ is necessarily of the form $b_m'\cdots b_k'b_k^{\varepsilon_k}\cdots b_n^{\varepsilon_n}$ for some $k\geq 1$. Furthermore, $(b_i')^{-1}=b_i^{\varepsilon_i}$, so $f_{b_{i}'}\circ f_{b_i^{\varepsilon_i}}=\mathrm{id}_Y$, for all $1\leq i<k$. It follows that
    \begin{align*}
        f_{vw}&=f_{b_m'}\circ \cdots \circ f_{b_k'}\circ (\mathrm{id}_Y)^{k-1}\circ f_{b_k^{\varepsilon_k}}\circ\cdots \circ f_{b_n^{\varepsilon_n}}\\ &=f_{b_m'}\circ \cdots \circ f_{b_1'}\circ f_{b_1^{\varepsilon_1}}\circ\cdots \circ f_{b_n^{\varepsilon_n}}=f_v\circ f_{\hat{w}}=f_v\circ f_{w}.
    \end{align*}
    Given any $y\in Y$, we define a configuration $\overline{y}\in \Omega^{\F(B)}$ as follows:
    $$\overline{y}(w)=(f_w(y))(1_{\F(B)^+}) \quad \text{for }w\in \F(B).$$
    By definition of $\overline{y}$, we have for all $v\in \F(B)^+$ and $w\in\F(B)$ that 
    \begin{align*}
        (w\cdot \overline{y})(v)&=\overline{y}(vw)=(f_{vw}(y))(1_{\F(B)^+})\\
        &=((f_{v}\circ f_{w})(y))(1_{\F(B)^+})=(v\cdot f_{w}(y))(1_{\F(B)^+})=f_w(y)(v)
    \end{align*}
    so $(w\cdot \overline{y})|_{\F(B)^+}=f_w(y)\in Y$ for all $w\in \F(B)$. Thus, by Proposition \ref{prop:symbolic_representation_of_natural_extension}, we conclude that $\overline{y}$ is an element of the natural $\mathbf{F}$-extension of $Y$ satisfying $\overline{y}|_{\F(B)^+}=y$. Since $y\in Y$ was arbitrary, $Y$ is $\mathbf{F}$-extensible.
    
    We want to see that every element of $Y_{\mathbf{F}}$ defines an element in $X_{\freeSgroup}$. Note that, for every $w\in \F(B)$ and $\overline{y}\in Y_{\mathbf{F}}$, we have $(w\cdot \overline{y})|_{\F(B)^+} \in Y$, so $(w\cdot \overline{y})|_{\F(B)^+}=x^w\circ [\cdot]_{\mathcal{R}^+}$ for a unique $x^w\in X$. This implies, for any $u,v\in \F(B)^+$ such that $[u]_{\mathcal{R}^+}=[v]_{\mathcal{R}^+}$, the following:
    $$\overline{y}(uw)=(w\cdot \overline{y})(u)=(x^w\circ[\cdot]_{\mathcal{R}^+})(u)=(x^w\circ [\cdot]_{\mathcal{R}^+})(v)=(w\cdot \overline{y})(v) = \overline{y}(vw).$$
    Therefore, if $[u]_{\mathcal{R}^+}=[v]_{\mathcal{R}^+}$ and $\overline{y}\in Y_{\mathbf{F}}$, for all $c\in \F(B)$,
    $$\overline{y}(c uv^{-1}c^{-1})=(v^{-1}c^{-1}\cdot \overline{y})(c u)=(v^{-1}c^{-1}\cdot \overline{y})(c v)=\overline{y}(1_{\F(B)}).$$
    If $w\in \F(B)$ satisfies $[w]_{\mathcal{R}}=1_{\Gamma}$, then, as $\freeSgroup=(\Gamma,\gamma)$ being the free $S$-group, $w$ can be written as 
    $$w=\prod_{j=1}^nc_ju_jv_j^{-1}c_j^{-1},$$
    where $[u_j]_{\mathcal{R}^+}=[v_j]_{\mathcal{R}^+}$, i.e., $(u_j,v_j)\in R$, and $c_j\in \F(B)$, for $1\leq j\leq n$. Therefore,
    $$
        \overline{y}(w) =\overline{y}\left(\prod_{j=1}^nc_ju_jv_j^{-1}c_j^{-1}\right)= \left(\prod_{j=2}^nc_ju_jv_j^{-1}c_j^{-1}\cdot \overline{y}\right)(c_1u_1v_1^{-1}c_1^{-1})=\overline{y}\left(\prod_{j=2}^nc_ju_jv_j^{-1}c_j^{-1}\right),
    $$
    and iterating, we obtain that $\overline{y}(w) =\overline{y}(1_{\F(B)})$. Finally, if $w,w'\in \F(B)$ are such that $[w]_\mathcal{R}=[w']_\mathcal{R}$, then
    $$\overline{y}(w')=(w\cdot \overline{y})(w'w^{-1})=(w\cdot \overline{y})(1_{\F(B)})=\overline{y}(w).$$
    Hence, any $\overline{y}\in Y_{\mathbf{F}}$ defines an element $\overline{x}\in X^{\Gamma}$ by taking $\overline{x}(g)=\overline{y}(w)$ for each $g \in \Gamma$, where $w$ is any $w\in \F(B)$ such that $g=[w]_{\mathcal{R}}$. Moreover, every such $\overline{x}$ belongs to $X_{\freeSgroup}$, i.e., $(g\cdot \overline{x})\circ \gamma\in X$ for all $g\in \Gamma$. Indeed, given $t \in S$ and $g \in \Gamma$, as $[\cdot]_{\mathcal{R}}$ and $[\cdot]_{\mathcal{R}^+}$ are onto, there exist $v \in \F(B)^+$ and $w \in \F(B)$ such that $t=[v]_{\mathcal{R}^+}$ and $g=[w]_{\mathcal{R}}$. There is a unique $x^w\in X$ with $(w\cdot \overline{y})|_{\F(B)^+}=x^w \circ [\cdot]_{\mathcal{R}^+}$. Then,
    $$(g\cdot \overline{x})(\gamma(t))=\overline{x}(\gamma(t)g)=\overline{x}([v]_{\mathcal{R}}[w]_{\mathcal{R}})=\overline{y}(vw)=(w\cdot \overline{y})(v)=x^w\circ [v]_{\mathcal{R}^+}=x^w(t),$$
    since $\gamma(t)=\gamma([v]_{\mathcal{R}^+})=[v]_\mathcal{R}$. We thus have that $(g\cdot \overline{x})\circ \gamma=x^w\in X$, and we conclude that $\overline{x}\in X_{\freeSgroup}$. Finally, for every $x\in X$ there is $\overline{y}\in Y_{\mathbf{F}}$ such that $\overline{y}\rvert_{\F(B)^+}=x\circ [\cdot]_{\mathcal{R}^+}$, and the associated $\overline{x}\in X_{\freeSgroup}$ satisfies $\overline{x}\circ \gamma=x$, so the map $z\in X_{\freeSgroup}\mapsto z\circ \gamma\in X$ is surjective, proving that $X$ is $\freeSgroup$-extensible.
\end{proof}

We conclude that every surjective $S$-action is $\freeSgroup$-extensible when $\freeSgroup$ is the free $S$-group.

\begin{corollary}\label{corollary:surjective_actions_extensible}
Let $X$ be a topological space and let $S \acts X$ be a surjective continuous $S$-action. Then, if $\freeSgroup$ is any realization of the free $S$-group, the action $S \acts X$ is $\freeSgroup$-extensible.
\end{corollary}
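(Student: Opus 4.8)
The plan is to deduce this statement by assembling the structural results established earlier, since the substantive work has already been carried out in Proposition~\ref{prop:subshifts_are_extensible}; the corollary is genuinely a matter of reduction and transport. First I would fix a presentation $S=\langle B\mid R\rangle^+$, which exists because $S$ is a countable monoid and hence is generated by some $B\subseteq S$ subject to a (countable) set of relations $R$, and let $\freeSgroup_0=(\langle B\mid R\rangle,\iota)$ be the associated canonical realization of the free $S$-group. By Proposition~\ref{prop:free_S_group_presentation} this is indeed a realization, and since $S$ is embeddable, \cite[Theorem 12.4]{CliffordII} guarantees that $\iota$ is an embedding, so that $\freeSgroup_0$ is an honest receiving $S$-group and the notion of $\freeSgroup_0$-extensibility is meaningful.

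The key reduction step is Proposition~\ref{prop:conjsubshift}: the continuous action $S\overset{\alpha}{\acts}X$ is conjugate to the shift action $S\acts Y$ on the $S$-subshift $Y=\{y\in X^S : s\cdot y(t)=y(st)\text{ for all }s,t\in S\}\subseteq X^S$. Here I would verify that the surjectivity hypothesis survives this conjugacy: each shift map on $Y$ is conjugate, via the fixed homeomorphism of Proposition~\ref{prop:conjsubshift}, to the corresponding $\alpha_s$, and since $\alpha$ is surjective by assumption, so is the shift action $S\acts Y$. Thus $Y$ is precisely a surjective $S$-invariant subset of $\Omega^S$ with $\Omega=X$, which is exactly the hypothesis required by Proposition~\ref{prop:subshifts_are_extensible}. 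Applying that proposition, $S\acts Y$ is $\freeSgroup_0$-extensible.

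From here I would transport extensibility first back along the conjugacy, and then across realizations of the free $S$-group. As observed immediately after the definition of extensibility, $\freeSgroup_0$-extensibility is a conjugacy invariant, so $S\acts X$ is itself $\freeSgroup_0$-extensible. Finally, any realization $\freeSgroup=(\Gamma,\gamma)$ of the free $S$-group is isomorphic to $\freeSgroup_0$ as an $S$-group, by uniqueness of the initial object in the category of $S$-groups; Lemma~\ref{lemma:isomorphic_S_groups} then yields that $\freeSgroup_0$-extensibility is equivalent to $\freeSgroup$-extensibility, which completes the argument.

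Since the proof is a composition of prior results, there is no genuine analytic obstacle. The only points demanding care are the two bookkeeping checks flagged above: ensuring that surjectivity is preserved by the conjugacy of Proposition~\ref{prop:conjsubshift} (so that Proposition~\ref{prop:subshifts_are_extensible} applies verbatim with $\Omega=X$), and confirming that the chosen presentation produces a bona fide receiving $S$-group, so that the passage between $\freeSgroup_0$ and an arbitrary realization $\freeSgroup$ via Lemma~\ref{lemma:isomorphic_S_groups} is legitimate.
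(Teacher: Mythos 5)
Your proposal is correct and follows essentially the same route as the paper's proof: reduce to the canonical realization $(\langle B\mid R\rangle,\iota)$ via Lemma~\ref{lemma:isomorphic_S_groups}, pass to a conjugate surjective $S$-subshift via Proposition~\ref{prop:conjsubshift}, apply Proposition~\ref{prop:subshifts_are_extensible}, and conclude by conjugacy invariance of $\freeSgroup$-extensibility. The only difference is cosmetic (you invoke Lemma~\ref{lemma:isomorphic_S_groups} at the end rather than at the start), and your explicit checks that surjectivity survives the conjugacy and that $\iota$ is an embedding are exactly the bookkeeping the paper leaves implicit.
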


\begin{proof}
By Lemma \ref{lemma:isomorphic_S_groups}, it suffices to show that $S\acts X$ is $\freeSgroup$-extensible when $\freeSgroup$ is the canonical realization of the free $S$-group. By Proposition \ref{prop:conjsubshift}, every surjective continuous $S$-action is conjugate to an $S$-invariant subset of $X^S$ upon which the shift action is surjective, and Proposition \ref{prop:subshifts_are_extensible} tells us that every such action is $\freeSgroup$-extensible. Since $\freeSgroup$-extensibility is preserved under conjugacies, we conclude the proof.
\end{proof}

Recall that every receiving $S$-group is a quotient of the free $S$-group.

\begin{proposition}\label{prop:non_extensible_subshift}
    For any receiving $S$-group $\receivingSgroup$ which is not isomorphic to the free $S$-group $\freeSgroup=(\Gamma,\gamma)$, there exist an alphabet $\mathcal{A}$ and a surjective $S$-subshift $X\subseteq \mathcal{A}^S$ such that $X_\receivingSgroup$ is empty. Moreover, if $\Gamma$ is residually finite, the $S$-subshift can be chosen to be finite.
\end{proposition}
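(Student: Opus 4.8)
The plan is to realize the desired subshift as a \emph{group coloring} associated to a carefully chosen auxiliary $S$-group, generalizing the mechanism of Example~\ref{Non-extensible}. Fix a presentation $S=\langle B\mid R\rangle^+$ and let $\theta\colon\Gamma\to G$ be the canonical morphism granted by the universal property of $\freeSgroup=(\Gamma,\gamma)$, so that $\theta\circ\gamma=\eta$. Since $G=\langle\eta(S)\rangle$ and $\theta(\gamma(S))=\eta(S)$, the morphism $\theta$ is surjective; and because $\receivingSgroup=(G,\eta)$ is not isomorphic to $\freeSgroup$ as an $S$-group, the kernel $N:=\ker\theta$ must be nontrivial. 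Now, for \emph{any} $S$-group $(H,\mu)$ I would consider the left-multiplication action $S\acts H$, $s\cdot h=\mu(s)h$, which is surjective since each $\mu(s)$ is invertible. By Proposition~\ref{prop:conjsubshift} this action is conjugate to the $S$-subshift $Y=\{y\in H^S:\mu(s)y(t)=y(st)\text{ for all }s,t\in S\}\subseteq H^S$, whose defining constraints reduce to the nearest-neighbor ones $\mu(b)y(t)=y(bt)$ supported on the edges $\{1_S,b\}$, $b\in B$; thus $Y$ is surjective and nearest neighbor, with alphabet $\mathcal{A}=H$.

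The key step is the characterization: $Y$ (equivalently, the action $S\acts H$) is partially $\receivingSgroup$-extensible if and only if there is an $S$-group morphism $\rho\colon(G,\eta)\to(H,\mu)$, i.e.\ a group homomorphism with $\rho\circ\eta=\mu$. As partial extensibility is invariant under conjugacy (via the functoriality of Remark~\ref{rem:extensions_are_functorial}, a conjugacy induces a bijection between extension sets), it suffices to analyze $H_{\receivingSgroup}=\{(h_g)_{g\in G}:\mu(s)h_g=h_{\eta(s)g}\text{ for all }s\in S,\ g\in G\}$. From a morphism $\rho$ one builds a point $h_g=\rho(g)h_0$ for any fixed $h_0\in H$; conversely, from a point $(h_g)\in H_{\receivingSgroup}$ I would set $\rho(g)=h_g h_{1_G}^{-1}$ and check that it is the required morphism. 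The defining relation gives immediately $\rho(\eta(s)g)=\mu(s)\rho(g)=\rho(\eta(s))\rho(g)$, and the nontrivial part is promoting this to full multiplicativity $\rho(g_1g_2)=\rho(g_1)\rho(g_2)$; this follows by induction on the length of $g_1$ as a word in $\eta(S)^{\pm1}$, using $\rho(\eta(s)^{-1}g)=\mu(s)^{-1}\rho(g)$ to handle inverses.

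It then remains to choose $(H,\mu)$ admitting no morphism from $(G,\eta)$. In the general case I would take $(H,\mu)=(\Gamma,\gamma)$, so $\mathcal{A}=\Gamma$ is a countable alphabet. If a morphism $\rho\colon(G,\eta)\to(\Gamma,\gamma)$ existed, then $\rho\circ\theta\colon\Gamma\to\Gamma$ would satisfy $(\rho\circ\theta)\circ\gamma=\gamma$, forcing $\rho\circ\theta=\mathrm{id}_\Gamma$ by the uniqueness clause of the universal property of the free $S$-group; this makes $\theta$ injective, contradicting $N\neq\{1\}$. For the residually finite refinement, pick $w\in N\setminus\{1\}$ and, using residual finiteness of $\Gamma$, a surjection $q\colon\Gamma\to F$ onto a finite group with $q(w)\neq1$; setting $\mu=q\circ\gamma$ makes $(F,\mu)$ an $S$-group (as $\langle\mu(S)\rangle=q(\Gamma)=F$) with finite alphabet $\mathcal{A}=F$, and then $Y\cong F$ is a finite subshift. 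A hypothetical morphism $\rho\colon(G,\eta)\to(F,\mu)$ would give $(\rho\circ\theta)\circ\gamma=\mu=q\circ\gamma$, whence $\rho\circ\theta=q$ by uniqueness; evaluating at $w$ yields $q(w)=\rho(\theta(w))=\rho(1_G)=1_F$, a contradiction.

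Combining these, the chosen $Y$ satisfies $Y_{\receivingSgroup}=\varnothing$, so it is not partially $\receivingSgroup$-extensible, while remaining surjective, nearest neighbor, and finite whenever $\Gamma$ is residually finite. I expect the main obstacle to lie in the clean verification of the homomorphism-extraction within the key characterization (the inductive promotion to multiplicativity, and the careful bookkeeping of inverses and of the base point $h_{1_G}$); by contrast, the non-existence of morphisms reduces to a short diagram chase powered by the uniqueness in the universal property of $\freeSgroup$.
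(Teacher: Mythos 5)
Your proposal is correct, and the subshift you build is exactly the one in the paper's proof: the set of group colorings $\{y\in H^S : \mu(s)y(t)=y(st)\ \text{for all } s,t\in S\}$, instantiated at $(H,\mu)=(\Gamma,\gamma)$ in the general case and at a finite quotient $(F,q\circ\gamma)$ separating a kernel element in the residually finite case; your observations that this subshift is surjective, nearest neighbor over the generating set, and finite when $H$ is finite all match the paper. Where you genuinely diverge is in how emptiness of the extension set is established. The paper argues directly: any $\overline{x}\in X_{\receivingSgroup}$ obeys the local rules $\overline{x}(\eta(s)h)=\gamma(s)\overline{x}(h)$ and $\overline{x}(\eta(s)^{-1}h)=\gamma(s)^{-1}\overline{x}(h)$, so evaluating along a word $w=\gamma(s_1)^{\epsilon_1}\cdots\gamma(s_n)^{\epsilon_n}\in\ker\theta\setminus\{1_\Gamma\}$, for which $\eta(s_1)^{\epsilon_1}\cdots\eta(s_n)^{\epsilon_n}=1_G$, yields $\overline{x}(1_G)=w\,\overline{x}(1_G)$, which is impossible by cancellation (and likewise in $F$ after applying the separating quotient). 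You instead prove an intermediate characterization---$H_{\receivingSgroup}\neq\varnothing$ if and only if there exists an $S$-group morphism $(G,\eta)\to(H,\mu)$---by extracting $\rho(g)=h_g h_{1_G}^{-1}$ from an extension point and promoting the local rules to full multiplicativity by induction on word length, and you then rule out all such morphisms using the uniqueness clause of the universal property of $\freeSgroup$ (an initial-object argument, applied once to $(\Gamma,\gamma)$ itself and once to $(F,q\circ\gamma)$). Your induction is sound: although it proceeds by choosing a word in $\eta(S)^{\pm 1}$ representing $g_1$, the identities being proved concern group elements, so no well-definedness issue arises; and your reduction to $H_{\receivingSgroup}$ via conjugacy-invariance of partial extensibility is justified by the functoriality remark. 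What your route buys is a reusable, conceptually sharper statement: for coset-coloring subshifts, partial $\receivingSgroup$-extensibility is exactly representability of $\receivingSgroup$ over $(H,\mu)$, which treats the general and residually finite cases uniformly. What the paper's route buys is brevity: it never needs the full homomorphism, only its value along one word in the kernel, so the inductive bookkeeping you flag as the main obstacle disappears entirely.
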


\begin{proof}
    Let $\receivingSgroup=(G,\eta)$. Since $\receivingSgroup$ is not isomorphic to $\freeSgroup$, there is a surjective morphism $\grmorph\colon \Gamma\twoheadrightarrow G$ such that $\grmorph\circ \gamma=\eta$, and $\text{ker}(\grmorph)$ is non-trivial, so there is an element $h\in \text{ker}(\grmorph)-\{1_\Gamma\}$. Writing $h=\gamma(s_1)^{\varepsilon_1}\cdots \gamma(s_n)^{\varepsilon_n}$, with $s_i\in S$ and $\varepsilon_i\in\{1,-1\}$ for all $1\leq i\leq n$, we have
    $$1_G=\grmorph(h)=\eta(s_1)^{\varepsilon_1}\cdots \eta(s_n)^{\varepsilon_n}.$$

    Consider the full $S$-shift $S\acts \Gamma^S$, and define $X=\{x\in\Gamma^S:x(s)=\gamma(s)x(1_S) \text{ for all }s\in S\}$. 
    The subset $X$ is $S$-invariant, since if $x\in X$ and $s,t\in S$ then $(s\cdot x)(t)=x(ts)=\gamma(ts)x(1_S)=\gamma(t)x(s)=\gamma(t)(s\cdot x)(1_S)$. It is also a closed subset: if $x$ is in the closure of $X$ and $s\in S$, then there is an element $y\in X$ which coincides with $x$ at both $1_S$ and $s$, implying $x(s)=y(s)=\gamma(s)y(1_S)=\gamma(s)x(1_S)$. Thus, $X$ is an $S$-subshift. The action $S\acts X$ is surjective: if $x\in X$ and $t\in S$, let $y\in \Gamma^S$ be defined by $y(s)=\gamma(s)\gamma(t)^{-1}x(1_S)$. As $y(1_S)=\gamma(t)^{-1}x(1_S)$, we see that $y\in X$. We have that $t\cdot y=x$, since
    $$(t\cdot y)(s)=y(st)=\gamma(st)\gamma(t)^{-1}x(1_S)=\gamma(s)x(1_S)=x(s)\quad \text{for all }s\in S.$$ 
    
    Suppose that there is an element $\overline{x}\in X_{\receivingSgroup}$. For all $s\in S$ and $g\in G$, we have $(g\cdot \overline{x})\circ\eta\in X$, so
    \begin{align*}
    \overline{x}(\eta(s)g)  &   =(g\cdot \overline{x})(\eta(s))=((g\cdot \overline{x})\circ\eta)(s)   \\
                            &   =\gamma(s)((g\cdot \overline{x})\circ\eta)(1_S)=\gamma(s)(g\cdot \overline{x})(1_G)=\gamma(s)\overline{x}(g).
    \end{align*}
    Thus, $\overline{x}(\eta(s)g)=\gamma(s)\overline{x}(g)$ for all $g\in G$. As $\overline{x}(g) = \overline{x}(\eta(s)\eta(s)^{-1}g) = \gamma(s)\overline{x}(\eta(s)^{-1}g)
    $, it follows that $\overline{x}(\eta(s)^{-1}g)=\gamma(s)^{-1}\overline{x}(g)$ for all $g\in G$. What we have just proven implies that 
    $$\overline{x}(1_G)=\overline{x}(\eta(s_1)^{\varepsilon_1}\cdots \eta(s_n)^{\varepsilon_n})=\gamma(s_1)^{\varepsilon_1}\cdots \gamma(s_n)^{\varepsilon_n}\overline{x}(1_G)=h\overline{x}(1_G),$$
    but we chose $h\neq 1_\Gamma$, so we obtain a contradiction. Thus $X_\receivingSgroup=\varnothing$.

    If, moreover, $\Gamma$ is residually finite, there is a finite group $F$ and a morphism $\grmorph_h\colon \Gamma\rightarrow F$ such that $\grmorph_h(h)\neq 1_F$. Defining $Z=\{\theta_h\circ x:x\in X\}$ and replicating the argument shown above, we obtain that $S\acts Z$ is a surjective $S$-subshift such that $Z_\receivingSgroup=\varnothing$. Finally, since $z\in Z$ is entirely determined by its value at $1_S$, we must have $|Z|\leq |F|<\infty$.
\end{proof}

Putting all together, we can provide a characterization of the free $S$-group.

\begin{thm}
\label{thm:main}
    Let $S$ be an embeddable monoid, $\freeSgroup=(\Gamma,\gamma)$ be a realization of the free $S$-group, and $\receivingSgroup$ be a receiving $S$-group. The following are equivalent:
        \begin{enumerate}
            \item[(i)] $\receivingSgroup$ is a realization of the free $S$-group, i.e., $\receivingSgroup\simeq \freeSgroup$.
            \item[(ii)] Every surjective continous $S$-action over a topological space is $\receivingSgroup$-extensible.
            \item[(iii)] Every surjective $S$-subshift is $\receivingSgroup$-extensible.
        \end{enumerate}
        \vspace{5pt}
        Moreover, if $\Gamma$ is residually finite, the following is also equivalent:
        \begin{enumerate}
            \item[(iv)] Every finite surjective $S$-subshift is $\receivingSgroup$-extensible.
        \end{enumerate}
\end{thm}

\begin{proof}
    Corollary \ref{corollary:surjective_actions_extensible} gives that (i) implies (ii). That (ii) implies (iii) is clear. Proposition \ref{prop:non_extensible_subshift} grants that (iii) implies (i). It is clear that (ii) implies (iv). Proposition \ref{prop:non_extensible_subshift}, under the assumption of residual finiteness, yields that (iv) implies (i).
\end{proof}

\begin{remark}
Due to the interaction between Polish topologies and the Borel setting, and since the critical steps in our exposition deal with symbolic subshifts, one can easily adapt the definition of the natural extension to the standard Borel framework and obtain an analogous characterization of the free $S$-group in terms of the $\receivingSgroup$-extensibility of every surjective standard Borel $S$-action.
\end{remark}

\subsection{Universality of the natural extension to the free \texorpdfstring{$S$}{S}-group}
The natural $\freeSgroup$-extension of an action $S\acts X$ to the free $S$-group $\freeSgroup$ is universal among all possible $\receivingSgroup$-extensions for all receiving $S$-groups $\receivingSgroup$. We formalize this idea through a universal property in an appropriate category.

Given a surjective continuous $S$-action $S \overset{\alpha}{\acts} X $ over a topological space $X$, we denote by $\textup{\textbf{\textsf{Ext}}}_\alpha$ the category of all possible group extensions of $\alpha$. More precisely, objects in $\textup{\textbf{\textsf{Ext}}}_\alpha$ are pairs $(\receivingSgroup,\Gextension)$ where $\receivingSgroup=(G,\eta)$ is a receiving $S$-group and $\Gextension=(Y,\beta,\tau)$ is a $\receivingSgroup$-extension of $\alpha$. An arrow $(\receivingSgroup',\Gextension') \to (\receivingSgroup,\Gextension)$ in $\textup{\textbf{\textsf{Ext}}}_\alpha$, with $\Gextension=(Y,\beta,\tau)$ and $\Gextension'=(Y',\beta',\tau')$, is a pair $(\grmorph, \varphi)$ where $\grmorph$ is a morphism of $S$-groups from $\receivingSgroup$ to $\receivingSgroup'$ and $\varphi\colon Y' \rightarrow Y$ is a continuous map such that $\tau \circ \varphi=\tau'$, which is also $\grmorph$-equivariant in the sense that $\varphi\circ\beta'_{\grmorph(g)}( x)=\beta_g\circ \varphi(x)$. Composition of two arrows $\left(\grmorph_1, \varphi_1\right) \circ\left(\grmorph_2, \varphi_2\right)$ is defined as $\left(\grmorph_2 \circ \grmorph_1, \varphi_1 \circ \varphi_2\right)$. A simple check shows that this composition is associative (note that the composition in the first coordinate is in reverse order).

The above category classifies all possible ways to extend the $S$-action $S\overset{\alpha}{\acts} X$ to a group action over a receiving $S$-group. By combining the universal properties of the free $S$-group $\freeSgroup$ among $S$-groups and the natural $\receivingSgroup$-extension over all $\receivingSgroup$-extensions, we obtain a joint universal property that shows that the natural $\freeSgroup$-extension ``encodes'' all other possible extensions:
\begin{thm}
\label{thm:C}
Given an embeddable monoid $S$ and a surjective continuous $S$-action $S\overset{\alpha}{\acts} X$, if $\freeSgroup$ is the free $S$-group and $\mathbf{X}_{\freeSgroup}$ is the natural $\freeSgroup$-extension of $\alpha$, the pair $(\freeSgroup,\mathbf{X}_{\freeSgroup})$ is a terminal object in $\textbf{\textsf{\textup{Ext}}}_\alpha$.
\end{thm}

\begin{proof}
Write $\freeSgroup=(\Gamma, \gamma)$ and $\mathbf{X}_{\freeSgroup}=(X_{\freeSgroup},\sigma,\pi)$. Let $(\receivingSgroup,\Gextension)$ be an object in $\textbf{\textsf{\textup{Ext}}}_\alpha$, with $\receivingSgroup = (G,\eta)$ and $\Gextension=(Y,\beta,\tau)$. As $\receivingSgroup$ is an $S$-group, there exists a unique surjective homomorphism $\grmorph\colon\Gamma \rightarrow G$ such that $\grmorph \circ \gamma=\eta$. Define a $\Gamma$-action $\beta^{\freeSgroup}$ on $Y$ by $\beta^{\freeSgroup}(g, x)=\beta(\grmorph(g), x)$. A direct computation shows that this is a well defined $\Gamma$-action. For any $s \in S$, we have
$$
\tau\circ\beta^{\freeSgroup}_{\gamma(s)}=\tau\circ\beta_{\grmorph(\gamma(s))} =\tau\circ\beta_{\eta(s)}= \alpha_s\circ \tau,
$$
so that $\tau$ is $\gamma$-equivariant as well. Thus, the tuple $\Gextension^{\freeSgroup}=\left(Y, \beta^\Gamma, \tau\right)$ is also a $\freeSgroup$-extension of $\alpha$, and since obviously $\tau \circ \text{id}_Y=\tau$, the ordered pair $(\grmorph, \text{id}_Y)$ is an arrow $(\receivingSgroup,\Gextension) \to (\freeSgroup,\Gextension^{\freeSgroup})$ in $\textbf{\textsf{Ext}}_\alpha$.

As $\Gextension^{\freeSgroup}$ and $\mathbf{X}_{\freeSgroup}$ are both $\freeSgroup$-extensions of $\alpha$, the universal property of natural $\freeSgroup$-ex\-ten\-sions provides us with a unique $\Gamma$-equivariant map $\varphi$ from $(Y,  \beta^\Gamma,\tau)$ to $(X_{\freeSgroup}, \sigma, \pi)$ such that $\tau=\pi \circ \varphi$. Thus, $(\text{id}_\Gamma, \varphi)$ is an arrow $(\freeSgroup,\Gextension^{\freeSgroup}) \rightarrow (\freeSgroup,\mathbf{X}_{\freeSgroup})$. Composing arrows, we obtain that
$$
(\text{id}_\Gamma, \varphi)\circ (\grmorph, \text{id}_Y) = (\grmorph \circ \text{id}_\Gamma, \text{id}_Y \circ \varphi)=(\grmorph, \varphi),
$$
so $(\grmorph, \varphi)$ is an arrow from $(\receivingSgroup,\Gextension)$ to $(\freeSgroup,\mathbf{X}_{\freeSgroup})$. 

It only remains to show that $(\grmorph, \varphi)$ is the unique possible arrow from $(\receivingSgroup,\Gextension)$ to $(\freeSgroup,\mathbf{X}_{\freeSgroup})$. Let $(\grmorph', \varphi')$ be any such arrow. As $\grmorph'$ is a morphism of $S$-groups from $\freeSgroup$ to $\receivingSgroup$, it is necessarily unique by the universal property of $\freeSgroup$, and hence $\theta'=\theta$. Proceeding as above, we define a $\Gamma$-action on $Y$ via the equality $\beta^{\freeSgroup}_g =\beta_{\grmorph(g)}$ for $g \in \Gamma$, which would make $\Gextension^{\freeSgroup}=\left(Y, \beta^{\freeSgroup},\tau \right)$ a $\freeSgroup$-extension of $\alpha$. Note that $\varphi'\circ\pi=\tau$, by the definition of arrows in $\textbf{\textsf{Ext}}_\alpha$, and $\theta$-equivariance means that $\varphi'\circ\beta_{\grmorph(g)}=\sigma_g\circ\varphi'$. Thus, $\varphi'\circ\beta^{\freeSgroup}_g=\sigma_g\circ\varphi'$, so $\varphi'$ is a morphism from $\Gextension^{\freeSgroup}$ to $\mathbf{X}_{\freeSgroup}$. The universal property of $\mathbf{X}_{\freeSgroup}$ among $\freeSgroup$-extensions yields $\varphi'=\varphi$. Hence, $(\grmorph',\varphi')=(\grmorph,\varphi)$.
\end{proof}

\section{A characterization of left reversibility through extension maps\label{sec4}}

A semigroup $S$ is said to be \textbf{left reversible} if for every $s,t\in S$, $sS\cap tS\neq \varnothing$. Groups and Abelian semigroups are easily found to be left reversible. An example of a semigroup that is not left reversible is the free semigroup $\F_2^+$.

A receiving $S$-group $\receivingSgroup = (G,\eta)$ is a \textbf{group of right fractions} of $S$ if for every $g\in G$ there exist $s,t\in S$ such that $g=\eta(s)\eta(t)^{-1}$. If $\receivingSgroup$ and $\receivingSgroup'$ are groups of right fractions of $S$, then there is an isomorphism between them, and may thus be denoted by $\GrRightFrac{S} = (\Gamma_S,\gamma)$. In the context of Abelian monoids, the group $\Gamma_S$ is known as the \emph{Grothendieck group}. 

In \cite[Theorem 1]{ore1931}, Ore proved that left reversibility is a sufficient condition for a bicancellative semigroup to be embeddable into a group. In \cite{dubreil1943problemes}, Dubreil noted that left reversibility is a necessary and sufficient condition for the existence of the group of right fractions of a bicancellative semigroup. Combining both results, we have the following.

\begin{theorem}[{\cite{dubreil1943problemes,ore1931}}]\label{ore-dubreil}
Let $S$ be a bicancellative semigroup. Then $S$ is left reversible if and only if $\GrRightFrac{S}$ exists. 
\end{theorem}

The previous result guarantees that if $S$ is left reversible and bicancellative, then $S$ is embeddable. This raises the question whether there can exist multiple receiving $S$-groups. The following lemma, essentially a rephrasing of \cite[Lemma 4]{Donnelly0}, leads to a negative answer.

\begin{lemma}\label{lem:rightfractions}
Let $S_1,S_2$ be left reversible bicancellative semigroups such that $S_1\leq S_2$, and let $(\Gamma_{S_2},\gamma)$ be the group of right fractions of $S_2$. Then $(\langle \gamma(S_1)\rangle,\gamma\rvert_{S_1})$ is the group of right fractions of $S_1$.
\end{lemma}

\begin{corollary}
\label{corollary:uniqfrac}
    If $S$ is left reversible and $\receivingSgroup = (G,\eta)$ is a receiving $S$-group, then $\receivingSgroup \simeq \bm{\Gamma}_S$.
\end{corollary}

\begin{proof}
Applying Lemma \ref{lem:rightfractions} with $S_1=\eta(S)$, $S_2=G$, and $\gamma=\text{id}_G$, we can verify that $(\langle\gamma(S_1)\rangle,\gamma\rvert_{S_1})\simeq \GrRightFrac{S_1}= \GrRightFrac{\eta(S)}$. If $\iota\colon \eta(S)\to G$ is the inclusion map, we have that $(G,\eta)$ is the group of right fractions of $S$ if and only if $(G,\iota)$ is the group of right fractions of $\eta(S)$. Since $(G,\iota)=(\langle \eta(S)\rangle,\iota)=(\langle \gamma(S_1)\rangle,\gamma\rvert_{S_1})$, we have that $(G,\iota)$ is the group of right fractions of $\eta(S)$.
\end{proof}

A direct consequence of Corollary \ref{corollary:uniqfrac} is that $\GrRightFrac{S}$  is the free $S$-group. If $S$ is bicancellative but not left reversible, then receiving $S$-groups are not necessarily unique modulo isomorphism of $S$-groups, as seen in Example~\ref{ex:many_receiving_groups}.

We present a property that characterizes the group of fractions of $S$. For a given receiving group $(G,\eta)$, we define the pre-order $\leq_S$ in $G$ by $g\leq_S h\iff hg^{-1}\in \eta(S)$. We say $\leq_S$ is \textbf{downward directed} if for every finite subset $F\subseteq G$ there is a $g\in G$ such that $g\leq_S h$ for all $h\in F$.

\begin{lemma}[{\cite[\S 3.6]{hilgert1}}]
\label{directed_order}
    Let $S$ be a left reversible semigroup, and $\receivingSgroup=(G,\eta)$ a receiving group for $S$. The following are equivalent.
    \begin{enumerate}
        \item[\textup{(i)}] $\receivingSgroup$ is the group of right fractions of $S$.
        \item[\textup{(ii)}] The pre-order $\leq_S$ is downward directed.
        \item[\textup{(iii)}] The set $\eta(S)$ is \textbf{thick} in $G$, i.e., for every finite subset $F\subseteq G$ there is a $g\in G$ such that $Fg\subseteq \eta(S)$.
    \end{enumerate} 
\end{lemma}

We can also give a symbolic characterization of the group of right fractions for embeddable semigroups:

\begin{proposition}\label{prop:reversibility_subshift}
Let $S$ be an embeddable semigroup and let $\G = (G,\eta)$ be a receiving group. Consider the element $\mathbf{1}_{\eta(S)}\in \{0,1\}^G$, i.e., the indicator function of $\eta(S)$ in $G$, and let $K_\eta =\overline{G\mathbf{1}_{\eta(S)}}$ be its orbit closure. Then, $\G$ is the group of right fractions of $S$ if and only if the fixed point $1^G$ is in $K_\eta$.
\end{proposition}
\begin{proof}
Suppose $\G\simeq\GrRightFrac{S}$, and let $F\subseteq G$ be any finite subset. By Lemma~\ref{directed_order}, there exists $g(F)\in G$ such that the translate $Fg(F)$ is entirely contained in $\eta(S)$, and thus $(g(F)\cdot \mathbf{1}_{\eta(S)})(h)=\mathbf{1}_{\eta(S)}(hg(F))=1$ for all $h\in F$. As the topology of $\{0,1\}^G$ is that of puntual convergence, this implies that the net $(g(F)\cdot \mathbf{1}_{\eta(S)})_{F\subseteq G}$, indexed over all finite subsets of $G$, converges to the constant function which is $1$ everywhere; thus, $1^G\in K_\eta$ as this set is closed.

Conversely, suppose that $1^G\in K_\eta$. This point is a limit point of the $G$-orbit of $\mathbf{1}_{\eta(S)}$, and thus, for any finite subset $F\subseteq G$, there exists some $g(F)\in G$ for which $(g(F)\cdot \mathbf{1}_{\eta(S)})\rvert_F=1^G\rvert_F$, i.e., $(g(F)\cdot \mathbf{1}_{\eta(S)})(h) = \mathbf{1}_{\eta(S)}(hg(F)) = 1$ for all $h\in F$. But then, $hg(F)\in \eta(S)$ for all $h\in F$, and hence the translate $Fg(F)$ is entirely contained in $S$; as this applies to all finite subsets $F\subseteq G$, Lemma~\ref{directed_order} ensures that $\G$ is the group of right fractions of $S$.
\end{proof}

The $\receivingSgroup$-extension functor (see Remark \ref{rem:extensions_are_functorial}) does not necessarily preserve factor maps; that is, surjectivity of $\varphi$ does not imply the corresponding property for $\varphi_{\receivingSgroup}$. However, under appropriate algebraic and topological conditions, this is always the case.

\begin{proposition}
\label{prop:compact-fibers}
    Let $S$ be a left reversible bicancellative semigroup, $\GrRightFrac{S}=(\Gamma_S,\gamma)$ its group of right fractions, and $S\acts X$ a continuous action. If $Y$ is compact, $S\acts Y$ is a continuous $S$-action, and $\varphi\colon Y\rightarrow X$ is a factor map, then $\varphi_{\GrRightFrac{S}}\colon Y_{\GrRightFrac{S}}\rightarrow X_{\GrRightFrac{S}}$ is a factor map. In particular, for every $\receivingSgroup$-extension $(\hat{X},\beta,\tau)$ of $S\acts X$  such that $\hat{X}$ is compact, the unique morphism of $\receivingSgroup$-extensions $\hat{X}\to X_{\receivingSgroup}$ is a factor map.
\end{proposition}

\begin{proof}
    Fix $\overline{x}=(x_h)_{h\in \Gamma_S}\in X_{\GrRightFrac{S}}$ and, appealing to Lemma \ref{directed_order}, pick a a sequence $(g_k)_{k\in\N}$ in $\Gamma_S$ such that for every $h\in \Gamma_S$, $h\in \gamma(S)g_k$ for some $k\in \N$. Define $\overline{y}^{k}=(y^k_h)_{h\in \Gamma_S}\in Y^{\Gamma_S}$ by choosing an arbitrary element $y^{k}_h\in \varphi^{-1}(x_h)$ for $h\in (\Gamma_S-\gamma(S)g_k)\cup\{g_k\}$, and $y^k_{h}=s_k\cdot y^k_{g_k}$ if $h\in \gamma(S)g_k-\{g_k\}$, where $s_k\in S$ is the unique element satisfying $h=\gamma(s_k)g_k$. Since $Y$ is compact, there is a convergent subnet $(\overline{y}^{k(\lambda)})_{\lambda\in \Lambda}$, with $k\colon \Lambda\to \N$ a monotone final function. Letting $\overline{y}=(y_h)_{h\in G}\in Y^{\Gamma_S}$ be a limit point for this subnet, we see that $\varphi_{\GrRightFrac{S}}(\overline{y})=\lim_{\lambda}\varphi_{\GrRightFrac{S}}(\overline{y}^{k(\lambda)})=\overline{x}$. By construction, $\overline{y}\in Y_{\GrRightFrac{S}}$ because for a given element $h\in \Gamma_S$, denoting by $s_{k(\lambda)}$ the unique element of $S$ satisfying $\gamma(s_{k(\lambda)})=hg_{k(\lambda)}^{-1}$ for each $\lambda\in \Lambda$, we have $y_{\gamma(s)h} = s\cdot y_h$, since
$$
\lim_{\lambda}y^{k(\lambda)}_{\gamma(s)h}=\lim_{\lambda}y^{k(\lambda)}_{\gamma(ss_{k(\lambda)})g_{k(\lambda)}}=\lim_{\lambda}(ss_{k(\lambda)})\cdot y^{k(\lambda)}_{g_{k(\lambda)}}=s\cdot\lim_{\lambda}s_{k(\lambda)}\cdot y^{k(\lambda)}_{g_{k(\lambda)}}=s\cdot \lim_{\lambda}y^{k(\lambda)}_{h}
$$
for all $s\in S$. Therefore, $\varphi_{\GrRightFrac{S}}$ is surjective, hence a factor map. 

For the final part of the statement, consider a $\GrRightFrac{S}$-extension $\Gextension = (\GextensionSet,\beta,\tau)$, where $\GextensionSet$ is compact. Consider the induced action $S\acts \hat{X}$ given by $s\cdot \hat{x}=\gamma(s)\cdot \hat{x}$, and let $(\hat{X}_{\GrRightFrac{S}},\sigma,\hat{\pi})$ be its natural $\GrRightFrac{S}$-extension. By Remark \ref{rem:natural-ext-of-group-action}, $\hat{\pi}\colon \hat{X}_{\GrRightFrac{S}}\to \hat{X}$ is a conjugacy. Since $S\acts \hat{X}$ is a factor of $S\acts X$ via $\tau$, the first statement of this proposition shows that $\tau_{\GrRightFrac{S}}\colon \hat{X}_{\GrRightFrac{S}}\to X_{\GrRightFrac{S}}$ is a factor map. Since $\pi\circ \tau_{\GrRightFrac{S}}=\tau\circ \hat{\pi}$, we have that $\varphi:=\tau_{\GrRightFrac{S}}\circ \hat{\pi}^{-1}$ is a map satisfying $\pi\circ \varphi=\tau$, and is thus the unique morphism of the $\GrRightFrac{S}$-extensions $(\GextensionSet,\beta,\tau)\to (X_{\GrRightFrac{S}},\sigma,\pi)$. It also satisfies $\varphi\circ \hat{\pi}=\tau_{\GrRightFrac{S}}$, so it must be surjective as a consequence of $\tau_{\GrRightFrac{S}}$ being surjective.
\end{proof}

The following two examples show that if we remove either the left reversibility or the compactness hypotheses in Proposition \ref{prop:compact-fibers}, its conclusion does not necessarily hold.

\begin{example}[A $\receivingSgroup$-extension not factoring through $X_{\receivingSgroup}$; compact non-left reversible case]\label{examp:problematic_extension}
    Let $S$ be a non-left reversible semigroup with receiving group $\receivingSgroup=(G,\eta)$. The $G$-subshift $K_\eta\subseteq \{0,1\}^G$ from Proposition~\ref{prop:reversibility_subshift} generated as the orbit closure of $\mathbf{1}_{\eta(S)}$ provides an example of a surjective continuous action $S\acts X$ admitting a $\receivingSgroup$-extension which does not factor through $X_{\receivingSgroup}$. Indeed, define $X\subseteq \{0,1\}^S$ as the subshift $X=\{y \circ\eta:y\in K_\eta\}$. The restriction map $\rho\colon K_\eta\to X$, $y\mapsto y\circ\eta$, is then evidently continuous, $\eta$-equivariant, and surjective, so $(K_\eta,\sigma,\rho)$ is a $\receivingSgroup$-extension of $X$, as it would be expected. By Proposition~\ref{prop:symbolic_representation_of_natural_extension}, one may identify the natural extension $X_{\receivingSgroup}$ of $X$ with the $G$-subshift given by $X_{\receivingSgroup} = \{x\in\{0,1\}^G : (g\cdot x) \circ \eta \in X \text{ for all }g\in G\}$, together with the restriction map $\pi\colon X_{\receivingSgroup}\to X$, $y\mapsto y \circ \eta$ as the associated extension map.
    
    From the definition of $X$, it is easy to see that $K_\eta$ is a subset of $X_{\receivingSgroup}$, and that the projection map $\rho\colon K_\eta\to X$ is just $\pi$ restricted to $K_\eta$. Moreover, as the fixed points $0^S$ and $1^S$ both belong to $X$, it is easily checked that the corresponding $G$-fixed points $0^G,1^G\in X_{\receivingSgroup}$. As $S$ is not left reversible, $G$ cannot be a group of fractions of $S$, and thus $1^G\not\in K_\eta$ by Proposition~\ref{prop:reversibility_subshift}, so the inclusion $K_\eta\subsetneq X_{\receivingSgroup}$ is strict.
    
    By the universal property of $X_{\receivingSgroup}$, there is a unique $G$-equivariant map $\varphi\colon K_\eta \to X_{\receivingSgroup}$ such that $\pi\circ\varphi=\rho$. As $\rho$ is just $\pi$ restricted to $K_\eta$, $\varphi$ is necessarily the inclusion map $j\colon K_\eta\to X_{\receivingSgroup}$, as it satisfies the equation $\pi\circ j=\rho$ and is continuous. Since the inclusion $K_\eta\subsetneq X_{\receivingSgroup}$ is strict, the inclusion map cannot be surjective. 
    \end{example}

\begin{example}[A $\receivingSgroup$-extension not factoring through $X_{\receivingSgroup}$; left reversible non-compact case]
\label{examp:problematic_extension2}

Given $x \in \R^S$, we say that $x$ \emph{vanishes at infinity} if for all $\varepsilon > 0$, there exists a finite subset $F \subseteq S$ such that $|x(s)| < \varepsilon$ for all $s \in S - F$. Consider the linear space $\text{c}_0(S) = \{x \in \R^S: x \text{ vanishes at infinity}\}$ endowed with the supremum norm given by $\|x\| = \sup_{s \in S}|x(s)|$. This normed space is a closed and separable subspace of the Banach space $\ell^\infty(S)$. In particular, $\text{c}_0(S)$ is Polish.

Suppose that $S$ is left reversible and bicancellative. Let $\GrRightFrac{S} = (\Gamma_S,\gamma)$ be its group of right fractions and consider the corresponding space $\text{c}_0(\Gamma_S)$. The map $\genextmap\colon \text{c}_0(\Gamma_S) \to \text{c}_0(S)$ given by $x\mapsto x\circ \gamma$ is clearly $S$-equivariant. Moreover, $\genextmap$ is surjective because, for every $x \in \text{c}_0(S)$, the point $y \in \text{c}_0(\Gamma_S)$ that satisfies $y(\gamma(s))=x(s)$ for all $s\in S$ and is $0$ otherwise satisfies that $\genextmap(y) = x$. In particular, $\Gamma_S \acts \text{c}_0(\Gamma_S)$ is a $\GrRightFrac{S}$-extension of $S \acts \text{c}_0(S)$ and, by Proposition \ref{ext_functor1}, the projection map $\pi: (\text{c}_0(S))_{\GrRightFrac{S}} \to \text{c}_0(S)$ is surjective and $\text{c}_0(S)$ is $\GrRightFrac{S}$-extensible.

We will prove that there is no surjective, continuous, and equivariant map $\varphi: \text{c}_0(\Gamma_S) \to (\text{c}_0(S))_{\GrRightFrac{S}}$ such that $\pi \circ \varphi = \tau$. Let's assume that $S$ is not a group (if $S$ is a group, $\varphi$ is conjugate to $\genextmap$, and hence $\varphi$ has to be surjective). Since $S$ is bicancellative, $S$ is necessarily infinite. By Proposition \ref{directed_order}, there exists a cofinal sequence $(g_n)_n$ in $\Gamma_S$ for the pre-order $\leq_S$, that is, for every $h \in \Gamma_S$, there is some $n$ such that $g_n \leq_S h$. Without loss of generality, we can assume that $g_{n+1} <_S g_n$ (i.e., $g_{n+1}\leq_S g_n$, but $g_n\not\leq_Sg_{n+1}$) for every $n \in \N$ (and thus that the $g_n$ are all distinct). Take $\overline{x} = (x_h)_{h\in \Gamma_S}$, where
$$
x_h(t) = 
\begin{cases}
2^n &   \text{if } \gamma(t)h = g_n \text{ for some } n \in \N, \\
0   &   \text{otherwise},
\end{cases}
$$
for $t \in S$ and $h \in \Gamma_S$. Notice that $x_h \in \text{c}_0(S)$ for every $h$. Indeed, $g_{n_0}\le_S h$ for some $n_0$, and $g_n <_Sg_{n_0}$ implies that $g_n <_Sh$ for all $n \ge n_0$, so $h\le_S g_n$ for finitely many values of $n$. Also, for every $s,t\in S$ and $h\in \Gamma_S$, we have
$$
(s \cdot x_h)(t) =x_h(ts)=n\iff \gamma(ts)h=g_n\text{ for some }n\iff 2^n= x_{\gamma(s)h}(t),$$
so $s\cdot x_h=x_{\gamma(s)h}$, and we conclude that $\overline{x}\in (\text{c}_0(S))_{\GrRightFrac{S}}$. Now assume that there is a surjective, $\Gamma_S$-equivariant, continuous function $\varphi\colon \text{c}_0(\Gamma_S)\to (\text{c}_0(S))_{\GrRightFrac{S}}$ such that $\genextmap=\pi\circ\varphi$. There must exist $y\in \text{c}_0(\Gamma_S)$ such that $\varphi(y)=\overline{x}$, and since $g\cdot \overline{x}=(x_{hg})_{h\in \Gamma_S}$,
$$\|x_g\| = \|\pi(g \cdot \overline{x})\| = \|\pi(g \cdot \varphi(y))\| = \|(\pi \circ \varphi)(g \cdot y)\| = \|\genextmap(g \cdot y)\| \leq \|g \cdot y\| = \|y\|,$$
where $\|\cdot\|$ denotes the norm of the corresponding space for each sequence. Therefore,
$$
2^n = x_{g_n}(1_S) \leq \|x_{g_n}\| \leq \sup\{\|x_g\|: g \leq_S g_n\} \leq \sup\{\|y\|: g \leq_S g_n\} = \|y\| < \infty.
$$
Since $n$ is arbitrary, this is a contradiction, and we conclude that no such surjective map can exist.
\end{example}

\begin{thm}
\label{thm:D}
    Let $S$ be an embeddable monoid and $\receivingSgroup$ a receiving group for $S$. The following are equivalent.
    \begin{enumerate}
        \item[\textup{(i)}] $S$ is left reversible and $\receivingSgroup \simeq \GrRightFrac{S}$.
        \item[(ii)] For every continuous action $S\acts X$ and for every $\receivingSgroup$-extension $(\hat{X},\beta,\tau)$ such that $\hat{X}$ is compact, the unique morphism of $\receivingSgroup$-extensions $\hat{X}\to X_{\receivingSgroup}$ is a factor map.
    \end{enumerate}
    In contrast, if $S$ is left reversible, there exists a (non-compact) separable Banach space $X$, a continuous action $S\acts X$, and a $\GrRightFrac{S}$-extension $(\hat{X},\beta,\tau)$ of $S\acts X$ such that the unique morphism of $\GrRightFrac{S}$-extensions $\hat{X}\to X_{\GrRightFrac{S}}$ is not a factor map.
\end{thm}

\begin{proof}
If (i) holds and $\receivingSgroup$ is the group of right fractions, then (ii) follows directly from Proposition \ref{prop:compact-fibers}. If $\receivingSgroup$ is not the group of right fractions, then Proposition \ref{prop:reversibility_subshift} allows to construct Example \ref{examp:problematic_extension}, which shows a compact $\receivingSgroup$-extension of a surjective $S$-subshift, of which $X_{\receivingSgroup}$ is not a factor. The last statement follows directly from Example \ref{examp:problematic_extension2}.
\end{proof}

\begin{remark}
Theorem~\ref{thm:D} gives out a natural interpretation of the natural extension as the smallest one in a set-theoretical sense, but as seen above, this only applies in the left reversible case. Thus, in the non-left reversible case, the definition of natural extension via its universal property proposed here becomes especially relevant.
\end{remark}

\begin{remark}
Results regarding the existence of invertible extensions of left reversible semigroup actions in the measure-theoretical setting have been recently obtained in \cite{bricenobustosdonoso2, donoso2024, farhangi1, rodriguez2025}.
\end{remark}

\section{Topological dynamics of left reversible bicancellative monoids}
\label{sec5}

We aim to study dynamical properties of actions of left reversible bicancellative monoids $S$ in terms of their natural extensions. In this case, as discussed in \S \ref{sec4}, the group of right fractions $\GrRightFrac{S}=(\Gamma_S,\gamma)$ of $S$ exists and is the only receiving $S$-group up to isomorphism. 

\subsection{Amenable topological entropy}
A semigroup $S$ is \emph{left amenable} if there is a \emph{left} \emph{$S$-invariant mean} on $\ell^{\infty}(S)$. However, as shown in \cite[Corollary 3.6]{namioka1964folner}, a left cancellative semigroup $S$ is \textbf{left amenable} if and only if there exists a \textbf{left F\o lner sequence}, namely a sequence $(F_n)_n$ of finite subsets of $S$ such that
\begin{equation}
\label{eqn1}
\lim_{n\to\infty}\frac{|sF_n \triangle F_n|}{|F_n|}=0\quad \text{for every }s\in S,
\end{equation}
and this definition will be enough for our purposes. There is an analogous notion of right F\o lner sequence; while left and right amenability are not equivalent properties (e.g., consider $\text{BS}(1,2)^+=\langle a,b\mid ab=b^2a\rangle^+$), they are in the case of groups. Nevertheless, passing to \emph{opposite semigroups}, the theories of left and right amenable semigroups are the same. As for groups, every Abelian semigroup, such as $\N^d$, is both left and right amenable. Check \cite{argabright1967semigroups,namioka1964folner} for further details.

It is known that every left amenable left cancellative semigroup is left reversible \cite[Lemma 1]{Donnelly0}. Thus, every left amenable bicancellative semigroup $S$ is embeddable into its group of right fractions $\GrRightFrac{S}$. Moreover, in the class of left reversible bicancellative semigroups, left amenability is characterized by the amenability of $\Gamma_{S}$. Note that, in general, a subsemigroup of a left amenable semigroup need not be left amenable \cite{frey1960studies}.

\begin{proposition}
Let $S$ be a left reversible bicancellative semigroup and $\GrRightFrac{S}=(\Gamma_S,\gamma)$ its group of right fractions. Then, $S$ is left amenable if and only if $\Gamma_S$ is an amenable group.
\end{proposition}

\begin{proof}
Assume that $S$ is left amenable, and choose a left F\o lner sequence $(F_n)_n$ for $S$. We will prove that $(\gamma(F_n))_{n}$ is a left F\o lner sequence for $\Gamma_S$. Since $\gamma(S)$ generates $\Gamma_S$, it suffices to check \eqref{eqn1} upon elements of $\gamma(S)\cup \gamma(S)^{-1}$. As $\gamma$ is injective and $\gamma(s)\gamma(F_n)\triangle \gamma(F_n)=\gamma(sF_n\triangle F_n)$, we have that
    $$\lim_{n\to\infty}\frac{|\gamma(s)\gamma(F_n)\triangle \gamma(F_n)|}{|\gamma(F_n)|}=\lim_{n\to\infty}\frac{|\gamma(F_n)\triangle \gamma(s)^{-1}\gamma(F_n)|}{|\gamma(F_n)|}=\lim_{n\to\infty}\frac{|sF_n\triangle F_n|}{|F_n|}=0\quad \text{for all }s\in S.$$

Conversely, assume $\Gamma_S$ is amenable, and let $(F_n)_n$ be a left F\o lner sequence for $\Gamma_S$. By Lemma \ref{directed_order}, for every $n$ there is an element $g_n\in \Gamma_S$ with $F_ng_n\subseteq \gamma(S)$. The sequence $(F_ng_n)_n$ is left F\o lner for $\gamma(S)$, since
$$\lim_{n\to\infty}\frac{|\gamma(s)F_ng_n\triangle F_ng_n|}{|F_ng_n|}=\lim_{n\to\infty}\frac{|\gamma(s)F_n\triangle F_n|}{|F_n|}= 0\quad \text{for all }s\in S.$$
As the semigroup $\gamma(S)$ is isomorphic to $S$, we conclude that $S$ is left amenable.
\end{proof}

Assume now that $S$ is left amenable, and $S\acts X$ is a surjective continuous $S$-action over a compact topological space $X$. Let $Y$ be a compact topological space, $T\colon Y\to X$ a continuous function, and $\mathcal{U},\mathcal{V}$ open covers of $X$. We define the open covers
$$T^{-1}\mathcal{U}=\{T^{-1}(U):U\in \mathcal{U}\}\quad\text{and}\quad\mathcal{U}\lor \mathcal{V}=\{U\cap V:U\in \mathcal{U},V\in \mathcal{V}\}.$$
This last open cover is called the \textbf{join} of $\mathcal{U}$ and $\mathcal{V}$, and its definition extends naturally to an arbitrary finite collection of open covers. For any finite $F\subseteq S$, denote by $\mathcal{U}^F$ the join
$$\bigvee_{s\in F}s^{-1}\mathcal{U},$$
where $s$ is understood as a continuous function $X\to X$. We say that $\mathcal{U}$ is a \textbf{refinement} of $\mathcal{V}$ if every element of $\mathcal{U}$ is contained in an element of $\mathcal{V}$, and that $\mathcal{U}$ is a \textbf{subcover} of $\mathcal{V}$ if $\mathcal{U}\subseteq \mathcal{V}$.
We write $N(\mathcal{U})$ to denote the minimum cardinality of an open subcover of $\mathcal{U}$, which is finite by compactness of $X$. In \cite{ceccherini2014analogue}, the authors prove that for every open cover $\mathcal{U}$ and left F{\o}lner sequence $(F_n)_n$ in $S$, the limit
$$h_{\mathcal{U}}(X,S)=\lim_{n\to\infty}\frac{1}{|F_n|}\log{N(\mathcal{U}}^{F_n})$$
exists, is finite, and independent of the choice of $(F_n)_n$. Define the \textbf{topological entropy} of $S\acts X$ by
$$h(X,S)=\sup \{h_{\mathcal{U}}(X,S):\mathcal{U}\text{ is an open cover of }X\}.$$

\begin{remark}
    Two standard properties are listed below (see \cite{ceccherini2014analogue}).
    \begin{enumerate}
        \item[(i)] If $\mathcal{U}$ is a refinement of $\mathcal{V}$, then $h_{\mathcal{U}}(X,S)\geq h_{\mathcal{V}}(X,S)$.
        \item[(ii)] If $T\colon X\rightarrow X$ is continuous, then $h_{T^{-1}\mathcal{U}}(X,S)\leq h_{\mathcal{U}}(X,S)$.
    \end{enumerate}
\end{remark}

We now show that the topological entropies of $S\acts X$ and $\Gamma_S\acts X_{\GrRightFrac{S}}$ are equal.

\begin{proposition}
    Let $S$ be a left amenable bicancellative monoid, $S\acts X$ a surjective continuous $S$-action over a compact topological space $X$, and $\GrRightFrac{S}=(\Gamma_S,\gamma)$ the group of right fractions of $S$. Then,
    $$h(X,S)=h(X_{\GrRightFrac{S}},\Gamma_S).$$
\end{proposition}

\begin{proof}
    Write $(X_{\GrRightFrac{S}},\sigma,\pi)$ for the natural $\GrRightFrac{S}$-extension of $S\acts X$. Fix a left F{\o}lner sequence $(F_n)_n$ for $S$, and note that $(\gamma(F_n))_{n}$ is a left F\o lner sequence for $\Gamma_S$. Fix an open cover $\mathcal{U}$ of $X$. If $\mathcal{U}_n$ is an open subcover of $\mathcal{U}^{F_n}$ of minimal cardinality, then $\pi^{-1}\mathcal{U}_n$ must be a subcover of $(\pi^{-1}\mathcal{U})^{\gamma(F_n)}$ of minimal cardinality as well, as a consequence of $\pi$ being surjective. Indeed, if an element $U\in \mathcal{U}_n$ is such that $\pi^{-1}U$ is covered by $\pi^{-1}U_1,\dots,\pi^{-1}U_k\in\pi^{-1}\mathcal{U}_n-\{\pi^{-1}U\}$, with $1\leq k<\lvert\mathcal{U}_n\rvert$, then $U=\pi(\pi^{-1}U)\subseteq U_1\cup\dots\cup U_k$ and $U\neq U_i$ for all $1\leq i\leq k$, contradicting the minimality of $\mathcal{U}_n$. Moreover, since $\pi$ is surjective, we have $|\mathcal{U}_n|=|\pi^{-1}\mathcal{U}_n|$, implying $N(\mathcal{U}^{F_n})=N((\pi^{-1}\mathcal{U})^{\gamma(F_n)})$. Therefore,
    $$h_{\mathcal{U}}(X,S)=\lim_{n\to\infty}\frac{1}{|F_n|}\log{N(\mathcal{U}^{F_n})}= \lim_{n\to\infty}\frac{1}{|\gamma(F_n)|}\log{N((\pi^{-1}\mathcal{U})^{\gamma(F_n)})}= h_{\pi^{-1}\mathcal{U}}(X_{\GrRightFrac{S}},\Gamma_S),$$
    and thus $h(X,S)\leq h(X_{\GrRightFrac{S}},\Gamma_S)$. 
    
    For the reversed inequality, let $\mathcal{B}$ be the class of sets of the form $\bigcap_{t\in K}t^{-1}\pi^{-1}(U^t)$, with $K\subseteq \Gamma_S$ finite and $U^t\subseteq X$ open for all $t\in K$. The collection $\mathcal{B}$ is a basis for the topology of $X_{\GrRightFrac{S}}$, and so any open cover of $X_{\GrRightFrac{S}}$ admits a refinement consisting of elements in $\mathcal{B}$, which at the same time, by compactness of $X_{\GrRightFrac{S}}$, admits a finite subcover. Since subcovers are refinements, and entropy is non-decreasing under taking refinements, it suffices to show that for every finite open cover $\hat{\mathcal{U}}\subseteq \mathcal{B}$, there is an open cover $\mathcal{U}$ of $X$ such that $h_{\hat{\mathcal{U}}}(X_{\GrRightFrac{S}},\Gamma_S)\leq h_{\mathcal{U}}(X,S)$. Let $\hat{\mathcal{U}}=\{\hat{U}_i:1\leq i\leq k\}$ be such an open cover of $X_{\GrRightFrac{S}}$. Write, for each $1\leq i\leq k$,
    $$\hat{U}_i=\bigcap_{t\in K_i}t^{-1}\pi^{-1}(U_i^t),$$
    with $K_i\subseteq \Gamma_S$ finite and $U_i^t\subseteq X$ open for each $t\in K_i$. By Lemma \ref{directed_order}, there is an $m\in \Gamma_S$ such that, for each $i$, $K_im^{-1}\subseteq \gamma(S)$. Denoting for every $t\in \bigcup_{i=1}^{k}K_i$ by $s_{t}$ the unique element of $S$ such that $\gamma(s_{t})=tm^{-1}$, by $\gamma$-equivariance of $\pi$ we have that
    $$m\hat{U}_i=\bigcap_{t\in K_i}(tm^{-1})^{-1}\pi^{-1}(U_i^t)=\bigcap_{t\in K_i}\pi^{-1}(s_{t}^{-1}(U_i^t))=\pi^{-1}\left(\bigcap_{t\in K_i}s_{t}^{-1}(U_i^t)\right).$$
    In other words, $m\hat{U}_i=\pi^{-1}(U_i)$, with $U_i=\bigcap_{t\in K_i}s_{t}^{-1}(U_i^t)$ an open subset of $X$. Let $\mathcal{U}=\{U_i\}_{i=1}^{k}$, which is an open cover since $m\hat{\mathcal{U}}$ is an open cover and $\pi$ is surjective.  Given a finite set $K\subseteq S$ and a subcover $\mathcal{U}'\subseteq \mathcal{U}^K$, we have that $\pi^{-1}\mathcal{U}'$ is a subcover of $(m\hat{\mathcal{U}})^{\gamma(K)}$. Indeed, given $\bigcap_{t\in K}t^{-1}U_{i_t}\in \mathcal{U}'$, we have
    $$\pi^{-1}\left(\bigcap_{t\in K}t^{-1}U_{i_t}\right)=\bigcap_{t\in K}\gamma(t)^{-1}\pi^{-1}(U_{i_t})=\bigcap_{t\in K}\gamma(t)^{-1}m\hat{U}_{i_t}\in (m\hat{\mathcal{U}})^{\gamma(K)}.$$
    Thus, if $\mathcal{U}'$ is a subcover of $\mathcal{U}^K$ of minimal cardinality, $N(\mathcal{U}^K) = |\mathcal{U}'| = |\pi^{-1}\mathcal{U}'| \geq N((m\hat{\mathcal{U}})^{\gamma(K)})$. By the invariance of entropy under images of open covers by homeomorphisms, we get
    $$h_{\hat{\mathcal{U}}}(X_{\GrRightFrac{S}},\Gamma_S)=h_{m\hat{\mathcal{U}}}(X_{\GrRightFrac{S}},\Gamma_S)\leq h_{\mathcal{U}}(X,S),$$
    and so $h(X_{\GrRightFrac{S}},\Gamma_S)\leq h(X,S)$.
\end{proof}

\subsection{Topological dynamical properties}
A continuous action $S\acts X$ is said to be \textbf{topologically ergodic} if for every pair of non-empty open subsets $U,V\subseteq X$, there is an $s\in S$ such that $sU\cap V\neq \varnothing$, \textbf{topologically transitive} if there exists an $x\in X$ with $\operatorname{cl}Sx=X$, and \textbf{minimal} if every $x\in X$ satisfies $\operatorname{cl}Sx=X$.
\begin{proposition}
    Let $S$ be a left reversible bicancellative monoid, $S\acts X$ a surjective continuous action upon a topological space $X$, and $\GrRightFrac{S}=(\Gamma_S,\gamma)$ be the group of right fractions of $S$. Then,
    \begin{enumerate}
        \item[\textup{(i)}] if $S\acts X$ is topologically ergodic, $\Gamma_S\acts X_{\GrRightFrac{S}}$ is topologically ergodic,
        \item[\textup{(ii)}] if $S\acts X$ is topologically transitive, $\Gamma_S\acts X_{\GrRightFrac{S}}$ is topologically transitive, and
        \item[\textup{(iii)}] if $S\acts X$ is minimal, $\Gamma_S\acts X_{\GrRightFrac{S}}$ is minimal.
    \end{enumerate}
\end{proposition}

\begin{proof}
    To prove (i), let $\hat{U},\hat{V}\subseteq X_{\GrRightFrac{S}}$ be two non-empty open sets. Without loss of generality, we may assume that these sets are basic, so there exists finite subsets $E,F\subseteq \Gamma_S$ and open sets $U_g,V_h\subseteq X$ for each $g\in E$ and $h\in F$ such that 
    $$\hat{U}=\bigcap_{g\in E}g^{-1}\pi^{-1}(U_g)\quad\text{and}\quad \hat{V}=\bigcap_{h\in F}h^{-1}\pi^{-1}(V_h).$$
    By Lemma \ref{directed_order}, there exists $m\in \Gamma_S$ such that, for every $g\in E$ and $h\in F$, there are elements $s_g,s_h\in S$ satisfying $\gamma(s_g)m=g$ and $\gamma(s_h)m=h$. This allows us to write
    $$\hat{U}=\bigcap_{g\in E}m^{-1}\gamma(s_g)^{-1}\pi^{-1}(U_g)=m^{-1}\bigcap_{g\in E}\pi^{-1}(s_g^{-1}U_g)=m^{-1}\pi^{-1}\left(\bigcap_{g\in E}s_g^{-1}U_g\right),$$
    where we use the fact that $\pi$ is $\gamma$-equivariant. In other words, $\hat{U}=m^{-1}\pi^{-1}(U)$, where $U$ is an open subset of $X$. Analogously, we can write $\hat{V}=m^{-1}\pi^{-1}(V)$, with $V\subseteq X$ open. By the topological ergodicity of $S\acts X$, there exists $s\in S$ such that $\varnothing\neq ss^{-1}U\cap s^{-1}V\subseteq U\cap s^{-1}V$, and by $\GrRightFrac{S}$-extensibility there is some $\overline{x}\in X_{\GrRightFrac{S}}$ with $\pi(\overline{x})\in U\cap s^{-1}V$. Then, we have that
    $$m^{-1}\cdot \overline{x}\in m^{-1}\pi^{-1}(U\cap s^{-1}V)=m^{-1}[\pi^{-1}(U)\cap \gamma(s)^{-1}\pi^{-1}(V)]=\hat{U}\cap m^{-1}\gamma(s)^{-1}m\hat{V},$$
    and so $\hat{U}\cap m^{-1}\gamma(s)^{-1}m\hat{V}\neq \varnothing$. This establishes (i).

    To prove (ii) and (iii), let $x\in X$ be an element such that $\operatorname{cl}Sx=X$, and fix $\overline{x}=(x_h)_{h\in \Gamma_S} \in X_{\GrRightFrac{S}}$ with $\pi(\overline{x})=x$. For a given basic open set $\hat{U}\subseteq X_{\GrRightFrac{S}}$, we may write $\hat{U}=m^{-1}\pi^{-1}(U)$, with $m\in \Gamma_S$ and $U\subseteq X$ open, in the same fashion as in (i). Then, there is an $s\in S$ with $\pi(\gamma(s)\cdot \overline{x})=s\cdot x\in U$, which implies $\gamma(s)\cdot \overline{x}\in \pi^{-1}(U)$, so that $m^{-1}\gamma(s)\cdot \overline{x}\in \hat{U}$. Hence, for every $x\in X$ with dense orbit and $\overline{x}\in \pi^{-1}(x)$ we have $\operatorname{cl}\Gamma_S\overline{x}=X_{\GrRightFrac{S}}$, from where (ii) and (iii) follow.
\end{proof}

\subsection*{Acknowledgments}

We would like to thank Sohail Farhangi for his helpful comments and observations. We are also grateful to the anonymous referee for valuable suggestions that greatly improved a first version of this work.

\subsection*{Funding}

R.B. was partially supported by ANID/FONDECYT Regular 1240508. A.B.-G. was supported by ANID/FONDECYT Postdoctorado 3230159 (year 2023). M.D.-E. was partially supported by Beca de Magíster Nacional ANID 22230917 (year 2023).

\bibliographystyle{abbrv}
\bibliography{references}

\end{document}